\newcommand{\NN}{{\mathbb N}}
\newcommand{\ZZ}{{\mathbb Z}}
\newcommand{\PP}{{\mathbb P}}
\newtheorem{theorem}{Theorem}
\newtheorem{lemma}[theorem]{Lemma}
\newtheorem{corollary}[theorem]{Corollary}
\newtheorem{proposition}[theorem]{Proposition}
\newtheorem{conjecture}[theorem]{Conjecture}
\newtheorem{notation}[theorem]{Notation}
\newtheorem{remark}[theorem]{Remark}
\theoremstyle{definition}
\newtheorem{example}{Example}
\begin{document}

%\begin{frontmatter}
\title[On the smallest trees with the same restricted $U$-polynomial]{On the smallest trees with the same restricted $U$-polynomial and the rooted $U$-polynomial}

\author{Jos\'e Aliste-Prieto \and Anna de Mier \and Jos\'e Zamora}
\address{Jos\'e Aliste-Prieto. Departamento de Matematicas, Universidad Andres Bello, Republica 498, Santiago, Chile}
\email{jose.aliste@unab.cl}

%\author[unab]{Jos\'e Aliste-Prieto}
%\ead{jose.aliste@unab.cl}
%\author[upc]{Anna de Mier}
%\ead{anna.de.mier@upc.edu}
%\author[unab]{Jos\'e Zamora}
%\ead{josezamora@unab.cl}
\address{Jos\'e Zamora. Departamento de Matem\'aticas, Universidad Andres Bello, Republica 498, Santiago, Chile
}
\email{anna.de.mier@upc.edu}

\address{Anna de Mier. Departament de Matem\`atiques, Universitat Polit\`ecnica de Catalunya, Jordi Girona 1-3, 08034 Barcelona, Spain
}
\email{josezamora@unab.cl}
\maketitle

\begin{abstract}
In this article, we construct explicit examples of pairs of non-isomorphic trees with the same restricted $U$-polynomial for every $k$; by this we mean that the polynomials agree on terms with degree at most $k+1$. The main tool for this construction is a generalization of the $U$-polynomial to rooted graphs, which we introduce and study in this article. Most notably we show that rooted trees can be reconstructed from its rooted $U$-polynomial. 
\end{abstract}
%
%\end{frontmatter}

\tikzstyle{every node}=[circle, draw, fill=black,
                        inner sep=0pt, minimum width=4pt,font=\small]

%\begin{keyword}
%$U$-polynomial, chromatic symmetric function, Stanley's conjecture
%\end{keyword}

%\end{frontmatter}

%%%%%%%%%%%%%%%%%%%%%%%%%%%%%%%%%%%%
\section{Introduction}\label{sec:intro}
%%%%%%%%%%%%%%%%%%%%%%%%%%%%%%%%%%
%In this abstract, we construct explicitly, for every $k$, pairs of non-isomorphic trees with the same restricted $U$-polynomial. The construction is done purely in algebraic terms, after introducing a version of the $U$-polynomial for rooted weighted graphs. 

The chromatic symmetric function \cite{stanley95symmetric} and the $U$-polynomial \cite{noble99weighted} are powerful graph invariants as they generalize many other invariants like,  for instance, the chromatic polynomial, the matching polynomial and the Tutte polynomial. It is well known that the chromatic symmetric function  and the $U$-polynomial are equivalent when restricted to trees, and there are examples of non-isomorphic graphs with cycles having the same $U$-polynomial (see \cite{brylawski1981intersection} for examples of graphs with the same polychromate and \cite{sarmiento2000polychromate,merino2009equivalence} for the equivalence between the polychromate and the $U$-polynomial) and also the same is true for the chromatic symmetric function (see \cite{stanley95symmetric}) .  However, it is an open question to know whether there exist non-isomorphic trees with the same chromatic symmetric function (or, equivalently, the same $U$-polynomial). The negative answer to the latter question, that is, the assertion that two trees that have the same chromatic symmetric function must be isomorphic, is sometimes referred to in the literature as \emph{Stanley's (tree isomorphism) conjecture}. This conjecture has been so far verified for trees up to 29 vertices~\cite{heil2018algorithm} and also for some classes of trees, most notably caterpillars \cite{aliste2014proper,loebl2019isomorphism} and spiders \cite{martin2008distinguishing}.

A natural simplification for Stanley's conjecture is to define a truncation of the $U$-polynomial, and then search for non-isomorphic trees with the same truncation. A study of these examples  could help to better understand the picture for solving Stanley's conjecture. 
To be more precise, suppose that $T$ is a tree with $n$ vertices. Recall that a parition $\lambda$ of $n$ is a sequence $\lambda_1,\lambda_2,\ldots,\lambda_l$ where $\lambda_1\geq \lambda_2\geq\cdots\geq \lambda_l$.Recall that $U(T)$ can be expanded  as
\begin{equation}
\label{eq:Uintro}
U(T)=\sum_{\lambda} c_\lambda \mathbf x_\lambda,
\end{equation}
 where the sum is over all partitions $\lambda$ of $n$, $\mathbf x_\lambda=x_{\lambda_1}x_{\lambda_2}\cdots x_{\lambda_l}$ and the $c_\lambda$ are non-negative integer coefficients (for details of this expansion see Section 2). 
In a previous work~\cite{Aliste2017PTE}, the authors studied the $U_k$-polynomial  defined by restricting {the sum in \eqref{eq:Uintro} to the partitions of length smaller or equal than $k+1$}, and then showed the existence of non-isomorphic trees with the same $U_k$-polynomial for every $k$. This result is based on a remarkable connection between the $U$-polynomial of a special class of trees and the Prouhet-Tarry-Escott problem in number theory. Although the Prouhet-Tarry-Escott problem is known to have solutions for every $k$, in general it is difficult to find explicit solutions, specially if $k$ is large. Hence, it was difficult to use this result to find explicit examples of trees with the same $U_k$-polynomial.

The main result of this paper is to give an explicit and simple construction of non-isomorphic trees with the same $U_k$-polynomial for every $k$. It turns out that for $k=2,3,4$ our examples coincide with the minimal examples already found by Smith, Smith and Tian~\cite{smith2015symmetric}.  This leads us to conjecture that for every $k$ our construction yields the smallest non-isomorphic trees with the same $U_k$-polynomial. We also observe that if this conjecture is true, then Stanley's conjecture is also true.

To prove our main result, we first introduce and study a generalization of the $U$-polynomial to rooted graphs, which we call the rooted $U$-polynomial or $U^r$-polynomial. As it is the case for several invariants of rooted graphs,  the rooted $U$-polynomial distinguishes rooted trees up to isomorphism. Under the correct interpretation, it can also be seen as a generalization of the pointed chromatic symmetric function introduced in \cite{pawlowski2018chromatic} (See Remark \ref{pawlowski}).  The key fact for us is that  the rooted $U$-polynomial exhibits simple product formulas when applied to some joinings of rooted graphs. These formulas together with some non-commutativity is what allows our constructions to work. 

Very recently, another natural truncation for the $U$-polynomial was considered in \cite{heil2018algorithm}. Here, they restrict the range of the sum in \eqref{eq:Uintro} to partitions whose parts are smaller or equal than $k$. They also verified that trees up to $29$ vertices are distinguished by the truncation with $k=3$ and proposed the conjecture that actually $k=3$ suffices to distinguish all trees. 

This paper is organized as follows. In Section \ref{sec:rooted}, we introduce the rooted $U$-polynomial and prove our main product formulas. In Section \ref{sec:dist}, we show that the rooted $U$-polynomial distinguishes rooted trees up to isomorphism. In Section \ref{sec:main}, we recall the definition of the $U_k$-polynomial and prove our main result.

%%%%%%%%%%%%%%%%%%%%%%%%%%%%%%%%%%%%%
\section{The rooted $U$-polynomial}\label{sec:rooted}
%%%%%%%%%%%%%%%%%%%%%%%%%%%%%%%%%%%%%

We give the definition of the $U$-polynomial first introduced by 
Noble and Welsh~\cite{noble99weighted}. %A \emph{weighted graph} is a pair $(G,\omega)$ where $G=(V,E)$ is 
We consider graphs where we allow  loops and parallel edges.
% and  $\omega:V\rightarrow\mathbb{Z}^+$ is a function that assigns strictly positive weights to the vertices of $G$.  If $U\subseteq V$, then the \emph{weight} of $U$ is defined as $\omega(U):=\sum_{v\in U} \omega(v)$.

Let $G = (V, E)$ be a graph. Given $A\subseteq E$, the restriction $G|_A$ of $G$ to $A$ is the subgraph of $G$ obtained from $G$ after deleting every edge that is not contained in $A$ (but keeping all the vertices). The \emph{rank} of $A$ is defined as $r(A) = |V| - k(G|_A)$, where $k(G|_A)$ is the number of connected components of $G|_A$.
The \emph{partition induced by $A$}, denoted by $\lambda(A)$, is the partition of $|V|$ whose parts are the sizes of the  connected components of $G|_A$.

Let $y$ be an indeterminate and $\mathbf{x} = x_1,x_2,\ldots$ be an infinite set of commuting indeterminates that commute with $y$. Given an integer partition $\lambda=\lambda_1,\lambda_2,\cdots,\lambda_l$, define  $\mathbf{x}_\lambda:=x_{\lambda_1}\cdots x_{\lambda_l}$. The \emph{$U$-polynomial} of a graph $G$ is defined as 
\begin{equation}
\label{def:W_poly}
U(G;\mathbf x, y)=\sum_{A\subseteq E}\mathbf x_{\lambda(A)}(y-1)^{|A|-r(A)}.
\end{equation}
%The $U$-polynomial of an unweighted graph $G$ is the $U$-polynomial of $G$ regarded as a weighted graph with all weights equal to one. 

Now we recall the definition of the $W$-polynomial for weighted graphs, from which the $U$-polynomial is a specialization. A \emph{weighted graph} is a pair $(G,\omega)$ where $G$ is a graph and $\omega:V(G)\rightarrow \PP$ is a function. We say that $\omega(v)$ is the weight of the vertex $v$.
Given a weighted graph $(G,\omega)$ and an edge $e$, 
the  graph $(G-e,\omega)$ is defined by deleting the edge $e$ and leaving $\omega$ unchanged. If $e$ is not a loop, then the graph $(G/e,\omega)$ is defined by first deleting $e$ then by identifying the vertices $u$ and $u'$ incident to $e$ into a new vertex $v$. We set $\omega(v)=\omega(u)+\omega(u')$ and leave all other weights unchanged.

The $W$-polynomial of a weighted graph $(G,\omega)$ is defined by the following properties:  
\begin{enumerate}
\item If $e$ is not a loop, then $W(G,\omega)$ satisfies
\[W(G,\omega) = W(G-e,\omega) + W(G/e,\omega);\]
\item If $e$ is a loop, then 
\[W(G,\omega) = y W(G-e,\omega);\]
\item If $G$ consists only of isolated vertices $v_1,\ldots,v_n$ with weights $\omega_1,\ldots,\omega_n$, then 
\[W(G,\omega) = x_{\omega_1}\cdots x_{\omega_n}.\]
\end{enumerate}
In \cite{noble99weighted}, it is proven that the $W$-polynomial is well-defined and that $U(G)=W(G,1_G)$ where $1_G$ is the weight function assigning weight $1$ to all vertices of $G$. The deletion-contraction formula is very powerful, but in this paper we will only use it in the beginning of the proof of Theorem \ref{theo:YZ} in Section \ref{sec:main}.

A \emph{rooted graph} is a pair $(G,v_0)$, where $G$ is a graph and $v_0$ is a vertex of $G$ that we call the \emph{root} of $G$. 
%Usually we will abuse notation by writing $G$ instead of $(G,\omega,v_0)$ 
%if $v_0$ and $\omega$ are clear by the context. The vertex $v_0$ is called 
%the \emph{root} of $G$. 
Given $A\subseteq E$, define $\lambda_r(A)$ to be the size of the component of $G|_A$ that contains the root $v_0$, and $\lambda_-(A)$ to be 
the partition induced by the sizes of all the other components. The \emph{rooted $U$-polynomial} is  
\begin{equation}
\label{def:U_poly_rooted}
U^r({G,v_0};\mathbf x, y, z)=\sum_{A\subseteq E}\mathbf x_{\lambda_{-}(A)}z^{\lambda_{r}(A)}(y-1)^{|A|-r(A)},
\end{equation}
where $z$ is a new indeterminate that commutes with $y$ and $x_1,x_2,\ldots$.
%The rooted $U$-polynomial (or $U^r$-polynomial) of an unweighted rooted graph $G$ is the $U^r$-polynomial of $G$ regarded as a weighted rooted graph.
We often write $G$ instead of $(G,v_0)$ when $v_0$ is clear from the context,  and so we will write 
$U^r(G)$ instead of $U^r(G,v_0)$. Also, if $(G,v_0)$ is a rooted graph, we will write $U(G)$ for the $U$-polynomial of $G$ (seen as an unrooted graph). If we compare $U^r(G)$ with $U(G)$, then we see that for each term of the form $\mathbf{x}_\lambda y^n z^m$ appearing in   $U^r(G)$ there is a corresponding term of the form $\mathbf{x}_\lambda y^n x_m$  in $U(G)$. This motivates the following notation and lemma, whose proof follows directly from the latter observation.

\begin{notation}
If $P(\mathbf{x}, y, z)$ is a polynomial, then $(P(\mathbf{x},y,z))^*$ is the polynomial obtained by expanding $P$ as a polynomial in $z$ (with coefficients that are polynomials in $\mathbf{x}$ and $y$) and then substituting $z^n\mapsto x_n$ for every $n\in\NN$. For instance, 
if $P(\mathbf{x},y,z) = x_1yz-x_2x_3z^3$, then $P(\mathbf{x},y,z)^* = x_1^2y-x_2x_3^2$. Note that in general 
%\[\left(R(\mathbf{x})P(\mathbf{x},z)+Q(\mathbf{x},z)\right)^* = 
%R(\mathbf{x}) P(\mathbf{x},z)^* +Q(\mathbf{x},z)^*\]
$(P(\mathbf{x},y,z)Q(\mathbf{x},y,z))^* \neq P(\mathbf{x},y,z)^*Q(\mathbf{x},y,z)^*$.
%necessarily equal as $x_1^2 = z^*z^* \neq (z^2)^* = x_2$. 
\end{notation}

\begin{lemma}
For every graph $G$ we have
\begin{equation}
(U^r(G))^* = U(G).
\end{equation}
\end{lemma}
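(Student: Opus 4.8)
The plan is to unwind both definitions and match them term by term under the operation $(\cdot)^*$. Starting from the definition of the rooted $U$-polynomial in \eqref{def:U_poly_rooted}, observe that it is already written as a sum over $A\subseteq E$, with each summand of the form $\mathbf{x}_{\lambda_-(A)}z^{\lambda_r(A)}(y-1)^{|A|-r(A)}$. Since this summand is a monomial in $z$ (it involves only the single power $z^{\lambda_r(A)}$), applying $(\cdot)^*$ distributes over the sum: $(U^r(G))^*=\sum_{A\subseteq E}\bigl(\mathbf{x}_{\lambda_-(A)}z^{\lambda_r(A)}(y-1)^{|A|-r(A)}\bigr)^*$. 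Here I would note explicitly that $(\cdot)^*$ is additive — this is immediate from the definition, since expanding a sum of polynomials in $z$ and then substituting $z^n\mapsto x_n$ is the same as doing this to each summand — so there is no subtlety despite the warning in the Notation that $(\cdot)^*$ is not multiplicative in general.

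Next I would evaluate the star on a single summand. Because $\mathbf{x}_{\lambda_-(A)}$ and $(y-1)^{|A|-r(A)}$ do not involve $z$, the substitution $z^{\lambda_r(A)}\mapsto x_{\lambda_r(A)}$ gives $\bigl(\mathbf{x}_{\lambda_-(A)}z^{\lambda_r(A)}(y-1)^{|A|-r(A)}\bigr)^*=\mathbf{x}_{\lambda_-(A)}x_{\lambda_r(A)}(y-1)^{|A|-r(A)}$. The final step is the combinatorial identification $\mathbf{x}_{\lambda_-(A)}x_{\lambda_r(A)}=\mathbf{x}_{\lambda(A)}$: the partition $\lambda(A)$ of $|V|$ induced by $A$ consists of the sizes of all connected components of $G|_A$, which is exactly the size $\lambda_r(A)$ of the root component together with the sizes $\lambda_-(A)$ of all the other components; since the $x_i$ commute, the product of the corresponding variables is $\mathbf{x}_{\lambda(A)}$. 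Substituting this back yields $(U^r(G))^*=\sum_{A\subseteq E}\mathbf{x}_{\lambda(A)}(y-1)^{|A|-r(A)}$, which is precisely $U(G;\mathbf{x},y)$ as given in \eqref{def:W_poly}.

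I do not anticipate a genuine obstacle here; the statement is essentially a bookkeeping lemma, and the only point requiring a word of care is the one flagged by the authors themselves — that $(\cdot)^*$ behaves well on sums but not on products. The proof works precisely because $U^r(G)$ is presented as a sum of $z$-monomials before any product is formed, so the non-multiplicativity of $(\cdot)^*$ never comes into play. I would keep the writeup to a few lines, making the additivity of $(\cdot)^*$ and the decomposition $\lambda(A)=\lambda_r(A)\cup\lambda_-(A)$ explicit, and then conclude by comparing with \eqref{def:W_poly}.
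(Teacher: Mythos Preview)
Your proof is correct and is exactly the approach the paper intends: it just makes precise the one-sentence observation preceding the lemma, namely that each summand $\mathbf{x}_{\lambda_-(A)}z^{\lambda_r(A)}(y-1)^{|A|-r(A)}$ becomes $\mathbf{x}_{\lambda(A)}(y-1)^{|A|-r(A)}$ under $(\cdot)^*$ because $\lambda(A)$ is $\lambda_r(A)$ together with $\lambda_-(A)$. The paper itself gives no further details, so your writeup is simply the fleshed-out version of their remark.
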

\begin{remark}
We could also define a rooted version of the $W$-polynomial, but we will not need this degree of generality for the purposes of this article. 
\end{remark}

%For instance, if $P = x_1yz-x_2x_3z^3$, then $[P]^* = x_1^2y-x_2x_3^2$. %Note that in general 
%\[\left(R(\mathbf{x})P(\mathbf{x},z)+Q(\mathbf{x},z)\right)^* = 
%R(\mathbf{x}) P(\mathbf{x},z)^* +Q(\mathbf{x},z)^*\]
%$[P(\mathbf{x},y,z)Q(\mathbf{x},y,z)]^* \neq [P(\mathbf{x},y,z)]^*[Q(\mathbf{x},y,z)]^*$.
%Note that for all $P$ and $Q$ we have 
%$[P+Q]^*=[P]^*+[Q]^*$ but in general $[PQ]^*\neq[P]^*[Q]^*$. With this notation our last observation becomes

%Recall that the joining of two graphs is obtained by taking the disjoint union of the graphs and then identifying  one special vertex of the first graph to one special vertex of the second graph. It is well-known that the Tutte polynomial of the joining of two graphs is the product of the Tutte polynomials of each graph. It turns out that the $U^r$-polynomial has a analogue property when considering a natural notion of joining for rooted weighted graphs. 

\subsection{Joining of rooted graphs and product formulas}
In this section we show two product formulas for the rooted $U$-polynomial. These will play a central role in the proofs of the results in the following sections.
Let $(G,v)$ and $(H,v')$ be two  rooted graphs. Define
$G\odot H$ to be the rooted graph obtained after first taking the disjoint union of $G$ and $H$ and then by identifying $v$ and $v'$. We refer to $G\odot H$ as \emph{the joining} of $G$ and $H$. Note that from the definition it is clear that $G\odot H = H\odot G$. 
We also define $G\cdot H$ to be the rooted graph obtained after first taking the disjoint union of $G$ and $H$, then adding an edge between $v$ and $v'$ and finally declaring $v$ as the root of the resulting graph. Since we made a choice for the root, in general $G\cdot H$ and $H\cdot G$ are isomorphic as unrooted graphs, but not as rooted graphs. 
% (the roots are different vertices of the connecting edge). 
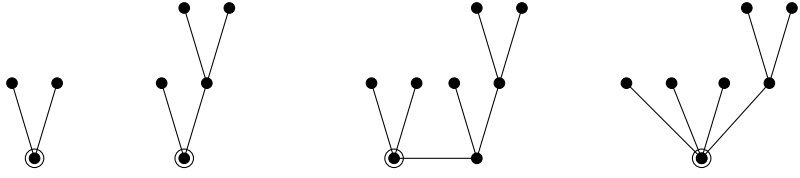
\begin{figure}
\begin{center}
\begin{tikzpicture}
\draw (0.5,0) node[fill=none,minimum width=7pt] {};
\draw (0.2,1) node {} -- (0.5,0)
            node {} -- (0.8,1)
            node {};
\end{tikzpicture}
\hspace{1cm}
\begin{tikzpicture}
\draw (0.5,0) node[fill=none,minimum width=7pt] {};
\draw (0.2,1) node {} -- (0.5,0)
            node {} -- (0.8,1)
            node {} -- (1.1,2)
            node {};
\draw (0.8,1)  -- (0.5,2)
            node {};
\end{tikzpicture}
\hspace{1.5cm}
\begin{tikzpicture}
\draw (0.5,0) node[fill=none,minimum width=7pt] {};
\draw (0.2,1) node {} -- (0.5,0)
            node {} -- (0.8,1)
            node {};
\draw (0.5,0)-- (1.6,0);
\draw (1.3,1) node {} -- (1.6,0)
            node {} -- (1.9,1)
            node {} -- (1.6,2)
            node {};
\draw (1.9,1)  -- (2.2,2)
            node {};
\end{tikzpicture}
\hspace{1cm}
\begin{tikzpicture}
\draw (0.5,0) node[fill=none,minimum width=7pt] {};
\draw (-0.5,1) node {} -- (0.5,0)
            node {} -- (0.1,1)
            node {};
\draw (0.8,1) node {} -- (0.5,0)
            node {} -- (1.4,1)
            node {} -- (1.1,2)
            node {};
\draw (1.4,1)  -- (1.7,2)
            node {};
\end{tikzpicture}
\end{center}
\caption{Example of two rooted graphs $G$ and $H$  and their different products $G\cdot H$ and $G\odot H$.} 
\end{figure}
\begin{lemma}
\label{lemma:joining}
Let $G$ and $H$ be two rooted graphs. We have
\begin{equation}
\label{eq:pseudo}
U^r(G\odot H) = \frac{1}{z}U^r(G)U^r(H).
\end{equation}
\end{lemma}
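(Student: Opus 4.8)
The plan is to expand both sides of \eqref{eq:pseudo} directly from the definition \eqref{def:U_poly_rooted} and exhibit a bijection between the index sets. Write $G=(G,v)$ and $H=(H,v')$, and let $G\odot H$ have vertex set $V(G)\cup V(H)$ with $v$ and $v'$ identified to a single root vertex $v_0$. Since $G$ and $H$ share no edges in the joining, every edge subset $A\subseteq E(G\odot H)$ decomposes uniquely as $A=A_G\cup A_H$ with $A_G\subseteq E(G)$ and $A_H\subseteq E(H)$, and conversely any such pair gives a subset of $E(G\odot H)$. This is the bijection I would use; the work is to check that all the statistics appearing in \eqref{def:U_poly_rooted} factor correctly across it.

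First I would handle the component structure. In $(G\odot H)|_A$, the root component is obtained by gluing the root component of $G|_{A_G}$ to the root component of $H|_{A_H}$ along $v_0$, so its size is $\lambda_r(A_G)+\lambda_r(A_H)-1$ (the $-1$ because the root vertex is counted in both). All non-root components of $(G\odot H)|_A$ are exactly the non-root components of $G|_{A_G}$ together with those of $H|_{A_H}$, hence $\lambda_-(A)=\lambda_-(A_G)\cup\lambda_-(A_H)$ as multisets, giving $\mathbf x_{\lambda_-(A)}=\mathbf x_{\lambda_-(A_G)}\,\mathbf x_{\lambda_-(A_H)}$ and $z^{\lambda_r(A)}=z^{\lambda_r(A_G)+\lambda_r(A_H)-1}=\tfrac1z\,z^{\lambda_r(A_G)}z^{\lambda_r(A_H)}$. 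Next, the rank term: $|A|=|A_G|+|A_H|$ trivially, and since the connected components of $(G\odot H)|_A$ are in bijection with those of $G|_{A_G}$ and $H|_{A_H}$ after merging the two root components into one, $k((G\odot H)|_A)=k(G|_{A_G})+k(H|_{A_H})-1$; combined with $|V(G\odot H)|=|V(G)|+|V(H)|-1$ this yields $r(A)=r(A_G)+r(A_H)$, and therefore $|A|-r(A)=(|A_G|-r(A_G))+(|A_H|-r(A_H))$, so $(y-1)^{|A|-r(A)}$ also factors.

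Putting these together, the general term of $U^r(G\odot H)$ indexed by $A=A_G\cup A_H$ equals $\tfrac1z$ times the product of the term of $U^r(G)$ indexed by $A_G$ and the term of $U^r(H)$ indexed by $A_H$. Summing over all $A$, which by the bijection is the same as summing over all pairs $(A_G,A_H)$ independently, gives $U^r(G\odot H)=\tfrac1z\,U^r(G)\,U^r(H)$, as claimed. I expect the only mildly delicate point to be the bookkeeping of the root vertex being shared — i.e.\ getting the three ``$-1$'' shifts (in component size, in number of components, and in vertex count) to cancel as they should — but once the identifications above are written out carefully this is routine. One should also note the edge case where $A_G$ or $A_H$ is empty, or where $G$ or $H$ is a single vertex; in all of these the formulas above still hold since the root component always has size at least $1$, so $z$ genuinely divides $U^r(G)$ and $U^r(H)$ and the division by $z$ is legitimate.
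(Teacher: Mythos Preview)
Your proof is correct and follows essentially the same approach as the paper's own: expand the right-hand side as a double sum over pairs $(A_G,A_H)$, use the bijection $A\leftrightarrow(A_G,A_H)$ induced by $E(G\odot H)=E(G)\sqcup E(H)$, and verify the identities $\lambda_-(A)=\lambda_-(A_G)\cup\lambda_-(A_H)$, $\lambda_r(A)=\lambda_r(A_G)+\lambda_r(A_H)-1$, and $r(A)=r(A_G)+r(A_H)$. If anything, your write-up is more careful than the paper's, since you explicitly justify the rank identity via the component and vertex counts and note the edge cases, whereas the paper simply asserts $r(A)=r(A_G)+r(A_H)$ without elaboration.
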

\begin{proof}
By substituting the definition of $U^r$ to $G$ and $H$ in the r.h.s. of \eqref{eq:pseudo} 
\begin{equation}
\label{W_poly_rooted}
\sum_{A_G\subseteq E(G)}\sum_{A_H\subseteq E(H)}\mathbf x_{\lambda_{-}(A_G)\cup\lambda_{-}(A_H)}z^{\lambda_{r}(A_G)+\lambda_{r}(A_H)-1}(y-1)^{|A_G|+|A_H|-r(A_G)-r(A_H)}.
\end{equation}
Given $A_G\subseteq E(G)$ and $A_H\subseteq E(H)$, set $A = A_G\cup A_H$. By the definition of the joining, there is a set $A'\subseteq E(G\odot H)$ corresponding to $A$ such that  $\lambda_{-}(A') = \lambda_{-}(A_G)\cup\lambda_{-}(A_H)$ and $\lambda_r(A) = \lambda_r(A_G)+\lambda_r(A_H)-1$. From these equations, one checks that $r(A)=r(A_G)+r(A_H)$. Plugging  these relations into \eqref{W_poly_rooted} and then rearranging the sum yields $U^r(G\odot H)$ and the conclusion now follows. 
\end{proof}
\begin{lemma}
Let $G$ and $H$ be two rooted graphs. Then we have 
\begin{equation}
\label{eq:sep_concat}
 U^r(G\cdot H) = U^r({G})(U^r(H) + U(H)).
\end{equation}
\end{lemma}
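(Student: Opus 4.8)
The plan is to expand both sides through the defining sum \eqref{def:U_poly_rooted} and match terms, organizing the edge subsets of $G\cdot H$ according to whether they contain the new bridge.

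Write $e$ for the edge of $G\cdot H$ joining the root $v$ of $G$ to the root $v'$ of $H$, so that $E(G\cdot H)=E(G)\sqcup E(H)\sqcup\{e\}$ and every $A\subseteq E(G\cdot H)$ decomposes uniquely as either $A=A_G\cup A_H$ or $A=A_G\cup A_H\cup\{e\}$ with $A_G\subseteq E(G)$ and $A_H\subseteq E(H)$. First I would treat the subsets with $e\notin A$: here $(G\cdot H)|_A$ is the disjoint union of $G|_{A_G}$ and $H|_{A_H}$, the component of the root is the one containing $v$ in $G|_{A_G}$, and the remaining components are those counted by $\lambda_-(A_G)$ together with all components of $H|_{A_H}$. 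Hence $\lambda_r(A)=\lambda_r(A_G)$ and, as multisets, $\lambda_-(A)=\lambda_-(A_G)\cup\lambda(A_H)$, while $|A|-r(A)=(|A_G|-r(A_G))+(|A_H|-r(A_H))$ since both $|A|$ and $r(A)$ are additive over disjoint unions. Summing over all such $A$ and factoring the double sum produces $U^r(G)$ times $\sum_{A_H\subseteq E(H)}\mathbf x_{\lambda(A_H)}(y-1)^{|A_H|-r(A_H)}$, which is exactly $U(H)$ by \eqref{def:W_poly}.

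Next I would treat the subsets with $e\in A$. Now the edge $e$ merges the root component of $G|_{A_G}$ with the root component of $H|_{A_H}$ into a single component of $(G\cdot H)|_A$ of size $\lambda_r(A_G)+\lambda_r(A_H)$, leaving all other components unchanged, so $\lambda_r(A)=\lambda_r(A_G)+\lambda_r(A_H)$ and $\lambda_-(A)=\lambda_-(A_G)\cup\lambda_-(A_H)$. For the exponent of $y-1$ one has $|A|=|A_G|+|A_H|+1$ and $k((G\cdot H)|_A)=k(G|_{A_G})+k(H|_{A_H})-1$, hence $r(A)=r(A_G)+r(A_H)+1$ and again $|A|-r(A)=(|A_G|-r(A_G))+(|A_H|-r(A_H))$. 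Summing and factoring now gives $U^r(G)\,U^r(H)$.

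Adding the two contributions yields $U^r(G\cdot H)=U^r(G)U(H)+U^r(G)U^r(H)=U^r(G)(U^r(H)+U(H))$, as claimed. The argument is essentially bookkeeping; the only step needing a moment of care is the rank computation when $e\in A$, where one must observe that inserting the bridge decreases the number of connected components by exactly one, so that the extra edge is compensated and the power of $y-1$ stays additive.
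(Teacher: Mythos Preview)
Your proof is correct and follows essentially the same approach as the paper's own proof: both split the subsets of $E(G\cdot H)$ according to whether the bridge $e$ is present, derive the identical relations for $\lambda_r$, $\lambda_-$, $|A|$, and $r(A)$ in each case, and then factor the resulting double sums into $U^r(G)U(H)$ and $U^r(G)U^r(H)$. The only cosmetic difference is that the paper packages the two cases with an indicator $\delta_F$ and verifies the identity by expanding the right-hand side, whereas you expand the left-hand side directly.
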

\begin{proof}
By definition, $E(G\cdot H) = E(G)\cup E(H)\cup\{e\}$, where $e$ is the edge joining the roots of $G$ and $H$. Thus, given $A\subseteq E(G\cdot H)$, we can write it as $A = A_G\cup A_H\cup F$ where $A_G\subseteq E(G)$, $A_H\subseteq E(H)$ and $F$
is either empty or $\{e\}$. Let $\delta_F$ equal to one if $F=\{e\}$ and zero otherwise. The following relations are easy to check: 
\begin{eqnarray*}
\lambda_{-}(A)&=&\begin{cases}
\lambda_{-}(A_G)\cup \lambda(A_H), &\text{if $F=\emptyset$,}\\
\lambda_{-}(A_G)\cup \lambda_{-}(A_H),&\text{otherwise};
\end{cases}\\
\lambda_r(A)&=& \lambda_r(A_G)+\lambda_r(A_H)\delta_{F};\\
r(A)&=& r(A_G)+r(A_H)+\delta_{F};\\
|A|&=&|A_G|+|A_H|+\delta_F. 
\end{eqnarray*}
Now replacing the expansions of $U^r(G)$, $U^r(H)$ and $U(H)$ into the r.h.s. of 
\eqref{eq:sep_concat} yields 
\begin{multline}
\sum_{A_G\subseteq E(G),A_H\subseteq E(H)} \mathbf x_{\lambda_{-}(A_G)\cup\lambda_{-}(A_H)} z^{\lambda_{r}(A_G)+\lambda_{r}(A_H)}(y-1)^{|A_G|-r(A_G)+|A_H|-r(A_H)}
\\+
\sum_{A_G\subseteq E(G),A_H\subseteq E(H)} \mathbf x_{\lambda_{-}(A_G)\cup\lambda(A_H)}z^{\lambda_{r}(A_G)}(y-1)^{|A_G|-r(A_G)+|A_H|-r(A_H)}
\end{multline}
Using the previous relations we can simplify the last equation to
\begin{multline}
\sum_{A = A_G\cup A_H\cup\{e\}} \mathbf x_{\lambda_{-}(A)} z^{\lambda_{r}(A)}(y-1)^{|A|-r(A)}
\\+
\sum_{A = A_G\cup A_H} \mathbf x_{\lambda_{-}(A)}z^{\lambda_{r}(A)}(y-1)^{|A|-r(A)},
\end{multline}
where in both sums $A_G$ ranges over all subsets of $E(G)$ and $A_H$ ranges over all subsets of $E(H)$. Finally, we can combine the sums to get $U^r(G\cdot H)$, which finishes the proof. 
\end{proof}
\begin{remark}
\label{pawlowski}
It is well-known (see \cite{stanley95symmetric,noble99weighted}) that the chromatic symmetric function of a graph can be recovered from the $U$-polynomial by 
\[X(G) = (-1)^{|V(G)|}U(G;x_i=-p_i,y=0).\]
In \cite{pawlowski2018chromatic}, Pawlowski introduced the rooted chromatic symmetric function. It is not difficult to check that 
\[X^r(G,v_0) = (-1)^{|V(G)|}\frac{1}{z}U^r(G,v_0;x_i=-p_i,y=0).\]
By performing this substitution on \eqref{eq:pseudo} we obtain Proposition 3.4 in \cite{pawlowski2018chromatic}.
\end{remark}

%It is  easy to check that  \[(G\cdot H)\odot L = G\cdot(H\odot L) \text{ and } (G\odot H)\cdot L = G\odot(H\cdot L)\]
%for every weighted rooted graphs $G,H$ and $L$. 
%We also define $W^+(G) = U^r(G) + U(G)$ for every weighted rooted graph $G$.
%\begin{lemma}
%\label{lemma:concat}
%Let $G$ and $H$ be two weighted rooted graphs. Then 
%\end{lemma}

%\bigskip

%\textbf{Anna:} est\'a bien la definici\'on del $W^+$? no deber\'ia ser en t\'erminos de los $U$ y no de los $U$? Por otro lado, para ahorrar espacio quizás se pueden definir primero los dos productos y luego dar las f\'ormulas en un solo lema. De hecho, no s\'e si la notaci\'on $W^+$ la usamos luego, seguramente nos la podemos ahorrar.

\section{The rooted $U$-polynomial distinguishes rooted trees}
\label{sec:dist}
In this section we will show that the rooted $U$-polynomial distinguishes rooted trees up to isomorphism. Similar results for other invariants of rooted trees appear in \cite{bollobas2000polychromatic,gordon1989greedoid,hasebe2017order}. The proof given here follows closely the one in \cite{gordon1989greedoid} but one can also adapt the proof of \cite{bollobas2000polychromatic}. Before stating the result we need the two following lemmas. 

%The first way consists in showing that the rooted $U$-polynomial is equivalent to the rooted polychromate introduced in \cite{bollobas2000polychromatic}. Since the rooted polychromate distinguishes rooted trees, so the roote, introduced in \cite{brylawski1981intersection},  and later found to be equivalent to the $U$-polynomial in \cite{sarmiento2000polychromate}. In \cite{bollobas2000polychromatic} the authors proved that a rooted version of the polychromate distinguishes rooted trees up to isomorphism.
\begin{lemma}
\label{lem:degree}
Let $(T,v)$ be a rooted tree. Then, the number of vertices of $T$ and the degree of $v$ can be recognized from $U^r(T)$.
\end{lemma}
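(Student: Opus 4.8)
The plan is to extract both quantities as coefficients of explicit monomials in $U^r(T)$. Since $T$ is a tree, $|A| = r(A)$ for every $A \subseteq E(T)$, so the factor $(y-1)^{|A|-r(A)}$ is always $1$ and
\[
U^r(T) = \sum_{A \subseteq E(T)} \mathbf{x}_{\lambda_-(A)}\, z^{\lambda_r(A)};
\]
in particular $U^r(T)$ is a polynomial in $z$ and the $x_i$ alone. For each $A$, the component sizes of $T|_A$ form a partition of $V(T)$, so in every monomial $\mathbf{x}_{\lambda_-(A)} z^{\lambda_r(A)}$ the exponent of $z$ together with the indices of the $x$-variables sum to $n := |V(T)|$.

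First I would recover $n$. Taking $A = E(T)$ gives the single monomial $z^n$, and since $\lambda_r(A) \le n$ for all $A$, no monomial of $U^r(T)$ has a higher power of $z$. Hence $n = \deg_z U^r(T)$, which is clearly determined by $U^r(T)$. If $n = 1$ then $v$ has degree $0$ and there is nothing more to prove, so assume $n \ge 2$.

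Next I would recover $\deg(v)$ as the coefficient of the monomial $x_1^{\,n-2} z^2$ in $U^r(T)$. A set $A \subseteq E(T)$ contributes to this monomial exactly when $\lambda_r(A) = 2$ and $\lambda_-(A) = 1^{n-2}$. The first condition forces the component of $v$ in $T|_A$ to have exactly two vertices, hence (being a subtree) to be a single edge $e$ incident to $v$; the second condition forces all other vertices to be isolated in $T|_A$, hence forces $A$ to contain no edge other than $e$. Thus the sets contributing to $x_1^{\,n-2} z^2$ are precisely the singletons $\{e\}$ with $e$ incident to $v$, each contributing $1$, and there are $\deg(v)$ of them. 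Therefore the coefficient of $x_1^{\,n-2} z^2$ in $U^r(T)$ equals $\deg(v)$.

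The argument is essentially bookkeeping with the definition of $U^r$, so I do not expect a real obstacle; the only point requiring a little care is the last step, namely verifying that no subset $A$ with two or more edges can also produce $x_1^{\,n-2} z^2$. This is immediate once one notes that an edge of $A$ not meeting $v$ creates a component of size at least $2$ away from the root, while a second edge meeting the component of $v$ enlarges it beyond size $2$ — either way the monomial changes.
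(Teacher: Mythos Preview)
Your proof is correct. The recovery of $n$ as $\deg_z U^r(T)$ matches the paper's argument exactly.

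For $\deg(v)$, however, you take a genuinely different route. The paper looks at the opposite extreme: it considers the terms of the form $z\,\mathbf{x}_\lambda$ (those $A$ containing no edge incident to $v$), observes that the one of minimum degree comes from $A = E\setminus I(v)$ and equals $z\,x_{n_1}\cdots x_{n_d}$, where $n_1,\ldots,n_d$ are the sizes of the components of $T-v$; then $\deg(v)$ is read off as (degree of that term) $-1$. You instead pick out the single monomial $x_1^{\,n-2}z^2$ and show its coefficient counts the edges incident to $v$. Your approach is slightly cleaner for the lemma as stated, since it reads $\deg(v)$ as a single coefficient rather than as the degree of a term that must first be isolated by a minimality argument. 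On the other hand, the paper's approach incidentally recovers the multiset $\{n_1,\ldots,n_d\}$ of branch sizes at $v$, which is strictly more information (even though the subsequent reconstruction theorem does not exploit this directly).
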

\begin{proof}
It is easy to see that {$U^r(T)=z^{n}+q(z)$} where $q(z)$ is a polynomial in $z$ of degree less than $n$ with coefficients in $\ZZ[y,\mathbf x]$ and  $n$ is the number of vertices of $T$. Hence, to recognize the number of vertices of $T$, it suffices to take the term of the form $z^j$ with the largest exponent in $U^r(T)$ and this exponent is the number of vertices. To recognize the degree of $v$, observe that a term of $U^r(T)$ has the form $zx_\lambda$ for some $\lambda$ corresponding to $A$ if and only the edges of $A$ are not incident with $v$. In particular, the term of this form with smaller degree correspond to $A=E\setminus I(v)$ where $I(v)$ denotes the set of edges that are incident with $v$ and in fact the term is $zx_{n_1}x_{n_2}\ldots x_{n_d}$ where $n_1,n_2,\ldots,n_d$ are the number of vertices in each connected component of $T-v$. Since each connected component is connected to $v$ by an edge,  this means that the degree of $v$ is equal to $d$ and hence it is the degree of this term minus one.
\end{proof}
\begin{lemma}
\label{lem:irreducible}
Let $(T,v)$ be a rooted tree. Then, 
$\frac{1}{z}U^r(T,v)$ is irreducible if and only if  the degree of $v$ is one. 
\end{lemma}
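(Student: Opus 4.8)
I want to prove that for a rooted tree $(T,v)$, the polynomial $\frac1z U^r(T,v)$ is irreducible (in $\ZZ[y,\mathbf x,z]$) if and only if $\deg(v)=1$. One direction is immediate from the product formula: if $\deg(v)\geq 2$, then removing $v$ splits $T$ into $d=\deg(v)\geq 2$ components $T_1,\dots,T_d$, and reattaching the pendant edge at $v$ to each shows $T = T_1'\odot T_2'\odot\cdots\odot T_d'$, where $T_i'$ is $T_i$ together with the edge to $v$, rooted at $v$. Applying Lemma \ref{lemma:joining} repeatedly gives $\frac1z U^r(T) = \prod_{i=1}^d \frac1z U^r(T_i')$, a product of $d\geq 2$ nonconstant polynomials (each $\frac1z U^r(T_i')$ has positive $z$-degree, in fact equals $z^{|T_i'|-1}+\cdots$ with $|T_i'|\geq 2$), so $\frac1z U^r(T)$ is reducible.

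**The converse direction.** For the converse I need: if $\deg(v)=1$ then $\frac1z U^r(T,v)$ is irreducible. Let $u$ be the unique neighbor of $v$. First I would isolate a convenient "lowest-degree-in-$z$" part of $U^r(T)$. Specializing or inspecting the coefficient structure: the terms of $U^r(T,v)$ with the smallest power of $z$ are exactly those coming from edge-sets $A$ not containing the edge $e=uv$ (so $v$ is isolated, contributing $z^1$); among those, the coefficient of $z$ is precisely $U(T-v) = U(T')$ where $T' = T-v$, viewed as an unrooted graph — more carefully, $\frac1z U^r(T,v)\big|_{z=0}$ picks out the $z^1$-coefficient of $U^r$, and by the $F=\emptyset$ analysis in the proof of \eqref{eq:sep_concat} (with $G$ the single root vertex, $H=T'$ rooted at $u$) this equals $U(T')$, a polynomial purely in $\mathbf x$ and $y$, with no $z$. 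This is the key leverage: if $\frac1z U^r(T,v) = PQ$ is a nontrivial factorization, then setting $z=0$ gives $U(T') = P|_{z=0}\,Q|_{z=0}$, and since $U(T')$ contains the monomial $x_n$ (from $A=\emptyset$) with coefficient $1$, hmm — actually I should argue about the $z$-degrees instead.

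**Main line of the argument.** The cleaner route: use Lemma \ref{eq:sep_concat}. With $v$ of degree one and neighbor $u$, write $T = R\cdot S$ where $R$ is the single-vertex rooted graph (root $v$) and $S = T'$ rooted at $u$; then \eqref{eq:sep_concat} gives $U^r(T) = U^r(R)\,(U^r(S)+U(S)) = z\,(U^r(T',u)+U(T'))$, so $\frac1z U^r(T,v) = U^r(T',u) + U(T')$. Now $U^r(T',u)$ has $z$-degree $|V(T')| = n-1$ with leading term $z^{n-1}$, while $U(T')$ has $z$-degree $0$. Write $\frac1z U^r(T,v) = \sum_{j=0}^{n-1} c_j z^j$ with $c_{n-1}=1$ and $c_0 = U^r(T',u)|_{z=0} + U(T') = 2\,U(T') $ — wait, $U^r(T',u)|_{z=0}=U(T')$? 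No: $U^r(T',u)$ always has every term divisible by $z$ (the root component has size $\geq 1$), so $U^r(T',u)|_{z=0}=0$ and thus $c_0 = U(T')$. Suppose $\frac1z U^r(T,v)=P\cdot Q$ with $P,Q$ nonconstant; since the leading $z$-coefficient is $1$, we may take both $P,Q$ monic in $z$ of degrees $a,b\geq 1$, $a+b=n-1$. The hard part — and the step I expect to be the main obstacle — is ruling this out by a degree/irreducibility argument on the $\mathbf x,y$-coefficients: I would look at the coefficient $c_1$ of $z^1$, which by the same $F=\emptyset$/joining bookkeeping equals (a sum over subsets) something like $U(T',u;\text{pointed})$ of a specific simple form, and compare it against $c_0 Q_1 + P_1 c_0$ forced by $P=z^a+\cdots+P_0$, $Q=z^b+\cdots+Q_0$ with $P_0 Q_0 = c_0$; iterating, one shows the constant terms $P_0,Q_0$ would have to be units in $\ZZ[y,\mathbf x]$, but $c_0 = U(T')$ evaluated appropriately is not a unit (its $y=1$, $\mathbf x\to$ specialization is $\geq 2$ in absolute value, or one uses that $U(T')$ has positive degree in the $x_i$). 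Since $P_0, Q_0\in\ZZ[y,\mathbf x]$ and their product $U(T')$ is a nonzero non-unit of positive total degree, at least one of them is nonconstant — but then, tracking the $z^1$ coefficient, a contradiction with the explicit shape of $c_1$ (which factors through an irreducible-enough expression) arises. I would phrase the final contradiction by induction on $n$: removing the pendant vertex $v$ and looking at $T'$, the factorization of $\frac1z U^r(T,v)$ descends to constrain $U^r(T',u)$, to which the inductive hypothesis (via Lemma \ref{lem:degree} to read off that the relevant sub-root still has degree controlled) applies. The genuinely delicate point is making the "constant-term-must-be-a-unit" step rigorous over the polynomial ring $\ZZ[y,\mathbf x]$ in infinitely many variables, which I would handle by specializing all but finitely many $x_i$ to $1$ and invoking unique factorization in the resulting finitely-generated polynomial ring.
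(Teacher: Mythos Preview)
Your treatment of the direction ``$\deg(v)\geq 2 \Rightarrow$ reducible'' is correct and matches the paper exactly: decompose $T$ as an $\odot$-product of its branches and apply Lemma~\ref{lemma:joining}.

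For the direction ``$\deg(v)=1 \Rightarrow$ irreducible'', however, your argument has a genuine gap, and the paper's proof uses a much simpler idea that you are missing. You correctly compute that $\frac1z U^r(T,v)$ is monic in $z$ of degree $n-1$ with constant term $U(T')$, and you set up a hypothetical factorization $PQ$ with $P,Q$ monic in $z$ of positive degree. But from $P_0Q_0 = U(T')$ you never actually extract a contradiction: you assert that ``tracking the $z^1$ coefficient, a contradiction with the explicit shape of $c_1$ \dots arises'', yet you neither compute $c_1$ nor explain what about it is incompatible with the factorization. The proposed induction on $n$ does not help either, since after deleting the leaf $v$ the new root $u$ may have degree larger than one, so the inductive hypothesis does not apply to $(T',u)$. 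As written, the argument is a sketch of a strategy whose crucial step is simply missing.

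The paper's proof avoids all of this by changing the distinguished variable. Instead of viewing $\frac1z U^r(T,v)$ as a polynomial in $z$, view it as a polynomial in $x_{n-1}$. The only subset $A\subseteq E$ for which $\lambda_-(A)$ has a part of size $n-1$ is $A=E\setminus\{e\}$ (where $e$ is the unique edge at $v$): then the root is isolated and the remaining $n-1$ vertices form a single connected component. Hence
\[
\frac1z U^r(T,v) \;=\; x_{n-1} \;+\; (\text{terms in }\ZZ[z,x_1,\dots,x_{n-2}]),
\]
which is monic of degree one in $x_{n-1}$ and therefore irreducible. This one-line observation replaces your entire inductive scheme.
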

\begin{proof}
Let $n$ denote the number of vertices of $T$. Suppose that the degree of $v$ is one. We will show that $\frac{1}{z}U^r(T,v)$
is irreducible. Denote by $e$ the only edge of $T$ that is incident with $v$. It is easy to check that $\lambda_r(A)\geq 1$ for all $A\subseteq E$ and that, if $A=E-e$, then $\lambda(A)=(n-1,1)$. Consequently,  
\[\frac{1}{z}U^r(T,v) = x_{n-1}+ 
\sum_{A\subseteq E, A\neq E-e}\mathbf x_{\lambda_{-}(A)}z^{\lambda_{r}(A)-1} \]
where the second sum is a polynomial in $\ZZ[z,x_1,x_2,\ldots,x_{n-2}]$. This implies that $\frac{1}{z}U^r(T,v)$ is a monic polynomial in $x_{n-1}$ of degree one, and hence it is irreducible. 
To see the converse, it suffices to observe that if the degree of $v$ is equal to $l>1$ then there are $T_1,T_2,\ldots, T_l$ rooted trees having a root of degree one and 
$(T,v) = T_1\odot T_2\odot\cdots\odot T_l$. This implies that 
\[\frac{1}{z}U^r(T) = \frac{1}{z}U^r(T_1)\frac{1}{z}U^r(T_2)\ldots\frac{1}{z}U^r(T_l)\]
and hence $\frac{1}{z}U^r(T)$ is not irreducible. 
\end{proof}
We say that a rooted tree $(T,v)$ can be reconstructed from its $U^r$-polynomial if we can determine $(T,v)$ up to rooted isomorphism  from $U^r(T,v)$. We show the following result.  
\begin{theorem}
\label{teo8}
All rooted trees can be reconstructed from its $U^r$-polynomial. 
\end{theorem}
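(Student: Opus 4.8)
The plan is to prove the theorem by strong induction on the number of vertices $n$ of the rooted tree $(T,v)$, using the two preceding lemmas together with the product formula of Lemma~\ref{lemma:joining}. The base case $n=1$ is trivial since the only rooted tree on one vertex is determined by $U^r=z$. For the inductive step, suppose the result holds for all rooted trees on fewer than $n$ vertices and let $(T,v)$ be a rooted tree on $n$ vertices. By Lemma~\ref{lem:degree} we can read off from $U^r(T)$ both $n$ and the degree $d$ of the root $v$. If $d=1$ I would argue separately (see below); if $d>1$, then as in the proof of Lemma~\ref{lem:irreducible} we have $(T,v)=T_1\odot T_2\odot\cdots\odot T_d$ where each $T_i$ is a rooted tree whose root has degree one, and by iterating Lemma~\ref{lemma:joining},
\[
\frac{1}{z}U^r(T)=\prod_{i=1}^{d}\frac{1}{z}U^r(T_i).
\]
By Lemma~\ref{lem:irreducible} each factor $\frac{1}{z}U^r(T_i)$ is irreducible, so this is a factorization of $\frac{1}{z}U^r(T)$ into irreducibles in the polynomial ring $\ZZ[y,x_1,x_2,\ldots]$. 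Since this ring is a UFD, the multiset of irreducible factors of $\frac1z U^r(T)$ is determined by $U^r(T)$ alone; hence the multiset $\{\frac1z U^r(T_i)\}_{i=1}^d$ is determined by $U^r(T)$, and therefore so is the multiset $\{U^r(T_i)\}_{i=1}^d$. Each $T_i$ has fewer than $n$ vertices, so by the inductive hypothesis each $T_i$ is reconstructible up to rooted isomorphism from $U^r(T_i)$; assembling them back via $\odot$ reconstructs $(T,v)$ up to rooted isomorphism.

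It remains to handle the case $d=1$. Here $v$ has a unique neighbour $w$, and $(T,v)$ is obtained from the rooted tree $(T-v,w)$ by adding the pendant edge $vw$ and rerooting at $v$; in the notation of the excerpt, $(T,v) = (K_1)\cdot (T-v,w)$ where $K_1$ is the one-vertex rooted graph, so by Lemma~\ref{eq:sep_concat} (applied with $G=K_1$, for which $U^r(K_1)=z$) we get $U^r(T,v)=z\bigl(U^r(T-v,w)+U(T-v)\bigr)$. To make the induction go through cleanly it is more convenient to reduce directly to a smaller rooted tree whose root has the right structure: since $\frac1z U^r(T,v)$ is irreducible by Lemma~\ref{lem:irreducible} we cannot split via $\odot$, so instead I would peel off the pendant vertex $v$ and argue that $U^r(T-v,w)$ can be recovered from $U^r(T,v)$. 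The cleanest route is to note that the vertex $w$ and the subtrees hanging off it can be recovered: write $(T-v,w)=S_1\odot\cdots\odot S_m$ with each $S_j$ of root-degree one, so that $\frac1z U^r(T-v,w)=\prod_j \frac1z U^r(S_j)$, and observe that $U^r(T,v)=z\bigl(z\prod_j\tfrac1z U^r(S_j)+U(T-v)\bigr)$; one then extracts the $S_j$'s (equivalently their $U^r$-polynomials) from $U^r(T,v)$ by a degree-and-coefficient analysis analogous to Lemma~\ref{lem:degree}, applies the inductive hypothesis to each $S_j$, and rebuilds $(T,v)$.

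The main obstacle I anticipate is precisely this $d=1$ case: when the root has degree one, $\frac1z U^r$ is irreducible and the clean UFD argument does not apply directly, so one must find a substitute way to recover the structure of $T-v$ from $U^r(T)$. I expect the resolution to mirror the identification of $E\setminus I(v)$ in the proof of Lemma~\ref{lem:degree} — namely, to isolate within $U^r(T,v)$ the contribution coming from subsets $A$ of the form $A_{T-v}\cup\{vw\}$, which carries exactly $z\cdot U^r(T-v,w)$ as a recognizable sub-sum — together with a careful check that no information is lost when one passes between $U^r(T,v)$, $U^r(T-v,w)$ and $U(T-v)$. Everything else (the inductive bookkeeping, the UFD step, reassembly via $\odot$) should be routine once the $d=1$ reduction is in place. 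Alternatively, one could avoid splitting off the root entirely and instead induct by always reducing to the subtrees hanging from the root after deleting $v$ itself, treating root-degree one and root-degree $\ge 2$ uniformly by first deleting the root; I would keep this as a fallback if the irreducibility obstruction proves awkward.
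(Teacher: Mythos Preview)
Your overall architecture matches the paper's proof exactly: strong induction on $n$, read off $d$ from Lemma~\ref{lem:degree}, and for $d>1$ use the factorization $\frac{1}{z}U^r(T)=\prod_i \frac{1}{z}U^r(T_i)$ together with irreducibility (Lemma~\ref{lem:irreducible}) and unique factorization to recover the branches. (One small correction: the relevant UFD is $\ZZ[z,x_1,x_2,\ldots]$, not $\ZZ[y,x_1,\ldots]$; the variable $z$ must be present, while $y$ does not appear for trees.)

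Where you diverge from the paper is the $d=1$ case, which you flag as the main obstacle and leave as a sketch. In fact this case is much simpler than you make it, and the paper dispatches it in two lines. You already have the correct identity
\[
U^r(T,v)=z\bigl(U^r(T',w)+U(T')\bigr),
\]
with $T'=T-v$. The key observation you are missing is that $U(T')$ contains no $z$ at all, while every monomial of $U^r(T',w)$ is divisible by $z$. Hence the two summands are separated by $z$-degree: the terms of $U^r(T,v)$ divisible by $z^2$ sum precisely to $z\,U^r(T',w)$, and dividing by $z$ recovers $U^r(T',w)$. Then apply the inductive hypothesis to $T'$ and prepend the pendant edge. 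There is no need to decompose $(T',w)$ further into $S_1\odot\cdots\odot S_m$, nor to do any coefficient analysis in the style of Lemma~\ref{lem:degree}; your proposed fallback of ``deleting the root uniformly'' is also unnecessary.
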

\begin{proof}
By  Lemma \ref{lem:degree} we can recognize the number of vertices of a rooted tree from its $U^r$-polynomial. Thus, we proceed by induction on the number of vertices. For the base case, there is only one tree with $1$ vertex, hence the assertion is trivially true. Now suppose that all rooted trees with $k-1$ vertices can be reconstructed from their $U^r$-polynomial and let $U^r(T,v)$ be the $U^r$-polynomial of some unknown tree $(T,v)$ with $k$ vertices. Again by Lemma \ref{lem:degree} we can determine the degree $d$ of $v$ from $U^r(T)$. We distinguish two cases: 

\begin{itemize}
\item $d=1$: In this case, let $T'=T-v$ rooted the unique vertex of $T$ that is incident to $v$. This means that $T = 1\cdot T'$ where $1$ is the rooted tree with only one vertex. From \eqref{eq:pseudo} it follows 
that \[U^r(T)  = z(U^r(T') + U(T'))=(\frac{1}{z}U^r(T'))z^2+U(T')z.\]
Since the variable $z$ does not appear in $U(T')$, we can determine $U^r(T')$ 
from $U^r(T)$ by collecting all the terms in the expansion of $U^r(T)$ that are divisible by $z^2$ and then dividing them by $z$. Since $T'$ has $k-1$ vertices, by the induction hypothesis, we can reconstruct $T'$ and hence the equality $T=1\cdot T'$ allows us to reconstruct $T$.
\item $d>1$: In this case, we know that $\frac{1}{z}U^r(T)$ is not irreducible by Lemma \ref{lem:irreducible} and hence it decomposes as
\[\frac{1}{z}U^r(T) = P_1P_2\cdots P_d,\]
where the $P_i$ are the irreducible factors in $\ZZ[z,x_1,\ldots]$. On the other hand, as in the proof of Lemma \ref{lem:irreducible}, $T$ can be decomposed into $d$ branches $T_1,T_2,\ldots, T_d$, which are 
rooted trees with the root having degree one, $T = T_1\cdot T_2\cdot T_d$ and 
\[\frac{1}{z}U^r(T) =\frac{1}{z}U^r(T_1)\frac{1}{z}U^r(T_2)\cdots \frac{1}{z}U^r(T_d).\]
Since $\ZZ[z,x_1,\ldots]$ is a unique factorization domain, up to reordering factors, we have $U^r(T_i) = zP_i$ for all $i\in\{1,\ldots,d\}$. Since $d>1$ and by the definition of the $T_i$'s they have at least one edge (and hence two vertices), it follows that each $T_i$ has at most $k-1$ vertices. Since we know each of their $U^r$-polynomials, by the hypothesis induction, we can reconstruct each of them, and so we can reconstruct $T$.
\end{itemize}
\end{proof}
\begin{corollary}
The $U^r$-polynomial distinguishes trees up to rooted isomorphism. 
\end{corollary}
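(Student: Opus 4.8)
The plan is to read this off Theorem \ref{teo8} with essentially no extra work. First I would note that $U^r$ is itself a rooted-isomorphism invariant: the sum in \eqref{def:U_poly_rooted} ranges over edge subsets, and the quantities $\lambda_-(A)$, $\lambda_r(A)$, $|A|$ and $r(A)$ depend only on the isomorphism type of the rooted graph $(G,v_0)$, so $(T_1,v_1)\cong(T_2,v_2)$ as rooted trees forces $U^r(T_1,v_1)=U^r(T_2,v_2)$. This is the trivial direction, but it is worth recording so that the statement ``distinguishes'' means exactly ``is a complete invariant''.

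For the converse — which is the real content of the corollary — I would argue as follows. Suppose $(T_1,v_1)$ and $(T_2,v_2)$ are rooted trees with $U^r(T_1,v_1)=U^r(T_2,v_2)$. By Theorem \ref{teo8} each $(T_i,v_i)$ is determined up to rooted isomorphism by its $U^r$-polynomial; since the two polynomials agree, applying the reconstruction procedure of Theorem \ref{teo8} to this common polynomial produces a rooted tree, unique up to rooted isomorphism, that must be rooted-isomorphic to both $(T_1,v_1)$ and $(T_2,v_2)$. Hence $(T_1,v_1)\cong(T_2,v_2)$, which is precisely the claim.

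I do not expect any genuine obstacle here; the only point worth a second look is that the reconstruction in Theorem \ref{teo8} uses nothing but data extractable from the polynomial itself — the top-degree term in $z$ and the lowest-degree term of the form $z\mathbf x_\lambda$ (via Lemma \ref{lem:degree}), the dichotomy $d=1$ versus $d>1$ (via Lemma \ref{lem:irreducible}), and, when $d>1$, the factorization in the unique factorization domain $\ZZ[z,x_1,x_2,\dots]$. Since each step depends only on $U^r(T,v)$, the reconstruction is well defined as a function of the polynomial, and the corollary follows. One could alternatively collapse the whole argument into the single observation that Theorem \ref{teo8} says precisely that the map from rooted-isomorphism classes of rooted trees to $U^r$-polynomials is injective.
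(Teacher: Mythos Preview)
Your proposal is correct and matches the paper's approach: the corollary is stated immediately after Theorem~\ref{teo8} with no separate proof, precisely because reconstructibility from $U^r$ is exactly the statement that the map from rooted-isomorphism classes to polynomials is injective. Your added remark that $U^r$ is a rooted-isomorphism invariant (the trivial direction) is fine to include but is also left implicit in the paper.
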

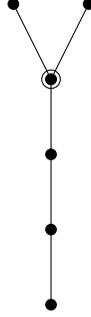
\begin{figure}
\begin{center}
\begin{tikzpicture}
\draw (0.5,0) node[fill=none,minimum width=7pt] {};
\draw (0,1) node {} -- (0.5,0)
            node {} -- (1,1)
            node {};
\draw (0.5,0) node {} -- (0.5,-1) node {}
              -- (0.5,-2) node {}
              -- (0.5,-3) node {};
\end{tikzpicture}
\end{center}
\caption{The reconstructed tree from Example \ref{example}.}
\end{figure}

\begin{example}
\label{example}
Suppose $U^r(T,v)=x_{1}^{5} z + 3 \, x_{1}^{4} z^{2} + 4 \, x_{1}^{3} z^{3} + 4 \, x_{1}^{2} z^{4} + 3 \, x_{1} z^{5} + z^{6} + 2 \, x_{1}^{3} x_{2} z + 5 \, x_{1}^{2} x_{2} z^{2} + 4 \, x_{1} x_{2} z^{3} + x_{2} z^{4} + x_{1}^{2} x_{3} z + 2 \, x_{1} x_{3} z^{2} + x_{3} z^{3}$. From the term $z^6$, we know that $T$ has $6$ vertices. The terms of the form $z\mathbf{x}_\lambda$ are $x_1^5z+2x_1^3x_2z+x_1^2x_3z$. Thus, the degree of $v$ is $3$. Moreover, if we factorize $\frac{1}{z}U^r(T,v)$ into irreducible factors we obtain
\[\frac{1}{z}U^r(T,v)={\left(x_{1}^{3} + x_{1}^{2} z + x_{1} z^{2} + z^{3} + 2 \, x_{1} x_{2} + x_{2} z + x_{3}\right)} {\left(x_{1} + z\right)}{\left(x_{1} +  z\right)}.\]
This means that 
\begin{eqnarray}
U_r(T_1,v_1)&=& x_{1}^{3}z + x_{1}^{2} z^2 + x_{1} z^{3} + z^{4} + 2 \, x_{1} x_{2} z + x_{2} z^2 + x_{3}z,\\
U_r(T_2,v_2)&=&x_{1}z + z^2,\\
U_r(T_3,v_3)&=&{x_{1}z +  z^2}.
\end{eqnarray}
From the terms $z^4$ and $x_3z$ in $U^r(T_1)$ it is easy to see that $T_1$ has $4$ vertices and $v_1$ has degree 1. Hence, $T_1=1\cdot T_1'$,
where 
\[U^r(T_1') = \frac{1}{z}\left(x_{1}^{2} z^2 + x_{2} z^2 +  x_{1} z^{3} + z^{4}\right) = x_{1}^{2} z + x_{2} z +  x_{1} z^{2} + z^{3}. \] 
Similarly $T_1'=1\cdot T_1''$, where 
\[U^r(T_1'') = \frac{1}{z}\left( x_{1} z^{2} + z^{3}\right)=x_{1} z + z^{2}.\] 
From this, it is not difficult to see that $T_2,T_3$ and $T_1''$ are rooted isomorphic to $1\cdot 1$. Finally, we have 
\[T = (1\cdot (1\cdot (1\cdot 1)))\odot (1\cdot 1)\odot (1\cdot 1).\]

\end{example}
%%%%%%%%%%%%%%%%%%%%%%%%%%%%%%%%%%%%%
\section{The restricted $U$-polynomial}
\label{sec:main}
%%%%%%%%%%%%%%%%%%%%%%%%%%%%%%%%%%%%%
Let $T$ be a tree with $n$ vertices. 
It is well known that in this case $r(A)=|A|$ for every $A\subseteq E(T)$. Hence, $U(T)$ and $U^r(T)$ (if $T$ is rooted) do not depend on $y$. Given an integer $k$, the  $U_k$-polynomial of $T$ is defined by 
\begin{equation}
U_k(T;\mathbf x)=\sum_{A\subseteq E,|A|\geq n-k}\mathbf x_{\lambda(A)}.
\end{equation}
Observe that since $T$ is a tree, every term in $U_k(T)$ has degree at most $k+1$ and that restricting the terms in the expansion of $U(T)$ to those of degree at most $k+1$ yields $U_k(T)$. 
As noted in the introduction, it is proved in \cite{Aliste2017PTE}  that for every integer $k$ there are non-isomorphic trees $T$ and $T'$ that have the same $U_k$-polynomial but distinct $U_{k+1}$-polynomial. However, the trees found in \cite{Aliste2017PTE} are not explicit. In this section, with the help of the tools developed in previous sections, we will explicitly construct such trees.  

We start by defining two sequences of rooted trees. Let us denote the path on three vertices, rooted at the central vertex, by $A_0$ and the path on three vertices, rooted at one of the leaves, by $B_0$. The trees $A_k$ and $B_k$ for $k\in\NN$ are defined inductively as follows:
\begin{equation}
\label{AK}
A_k := A_{k-1}\cdot B_{k-1}\quad\text{and}\quad
B_k := B_{k-1}\cdot A_{k-1}.
\end{equation}
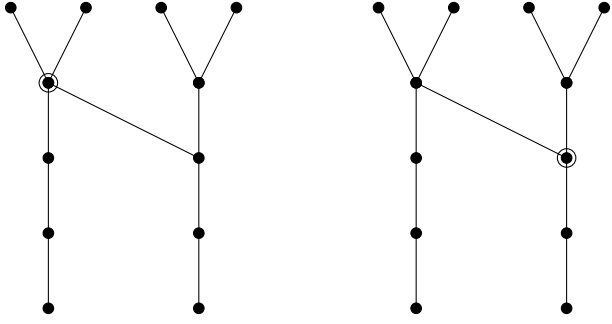
\begin{figure}
\centerline{
\begin{tikzpicture}
\draw (0.5,0) node[fill=none,minimum width=7pt] {};
\draw (0,1) node {} -- (0.5,0)
            node {} -- (1,1)
            node {};
\draw (0.5,0) node {} -- (0.5,-1) node {}
              -- (0.5,-2) node {}
              -- (0.5,-3) node {};
\draw (2,1) node {} -- (2.5,0)
           node {} -- (3,1)
            node {};
\draw (2.5,0) node {} -- (2.5,-1) node {}
              -- (2.5,-2) node {}
              -- (2.5,-3) node {};
\draw (2.5,-1) -- (0.5,0);
\end{tikzpicture}
\hspace{1.5cm}
\begin{tikzpicture}
\draw (2.5,-1) node[fill=none,minimum width=7pt] {};
\draw (0,1) node {} -- (0.5,0)
            node {} -- (1,1)
            node {};
\draw (0.5,0) node {} -- (0.5,-1) node {}
              -- (0.5,-2) node {}
              -- (0.5,-3) node {};
\draw (2,1) node {} -- (2.5,0)
           node {} -- (3,1)
            node {};
\draw (2.5,0) node {} -- (2.5,-1) node {}
              -- (2.5,-2) node {}
              -- (2.5,-3) node {};
\draw (2.5,-1) -- (0.5,0);
\end{tikzpicture}
}
\caption{The rooted trees $A_2$ and $B_2$}
\end{figure}
We first observe that $A_0$ and $B_0$ are isomorphic as unrooted trees but not isomorphic as rooted trees, which means that they have different $U^r$. In fact, a direct calculation shows that 
\[\Delta_0:=U^r(A_0)-U^r(B_0) = x_1z^2-x_2z.\]
By applying Lemma \ref{lemma:joining} we deduce:
\begin{proposition}
\label{prop}
For all $k\in\NN$, the trees $A_k$ and $B_k$ are isomorphic but not rooted-isomorphic.  Moreover, we have  
\begin{equation}
\label{DeltaK}
U^r(A_k) - U^r(B_k) = \Delta_0P_k,
\end{equation} 
where $P_k := U(A_0)U(A_1)\cdots U(A_{k-1})$.
\end{proposition}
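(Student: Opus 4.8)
The plan is to prove both assertions of Proposition~\ref{prop} simultaneously by induction on $k$, using the product formulas of Section~\ref{sec:rooted} together with the structural relation \eqref{AK}. The base case $k=0$ is exactly the stated computation $\Delta_0 = U^r(A_0)-U^r(B_0) = x_1z^2-x_2z$, and the fact that $A_0,B_0$ are isomorphic as unrooted trees (both are the path on three vertices) but not as rooted trees (the roots have degrees $2$ and $1$, respectively, which are distinguishable by Lemma~\ref{lem:degree}).

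For the inductive step, assume $A_{k-1}$ and $B_{k-1}$ are isomorphic as unrooted trees with $U^r(A_{k-1}) - U^r(B_{k-1}) = \Delta_0 P_{k-1}$. First I would observe that $A_k = A_{k-1}\cdot B_{k-1}$ and $B_k = B_{k-1}\cdot A_{k-1}$ are isomorphic as \emph{unrooted} graphs, since each is obtained by joining an unrooted copy of the common tree underlying $A_{k-1}$ and $B_{k-1}$ by an edge between a fixed vertex of each; only the choice of root differs. To see they are not rooted-isomorphic, one can argue inductively that the rooted isomorphism type is still detected, or more directly invoke that the difference $U^r(A_k)-U^r(B_k)$ computed below is nonzero (since $\Delta_0\ne 0$ and $P_k\ne 0$), and a rooted isomorphism would force equality of the $U^r$-polynomials.

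The heart of the argument is the computation of the difference. Applying \eqref{eq:sep_concat} gives
\begin{align*}
U^r(A_k) &= U^r(A_{k-1})\bigl(U^r(B_{k-1}) + U(B_{k-1})\bigr),\\
U^r(B_k) &= U^r(B_{k-1})\bigl(U^r(A_{k-1}) + U(A_{k-1})\bigr).
\end{align*}
Subtracting, the cross term $U^r(A_{k-1})U^r(B_{k-1})$ cancels, leaving
\begin{equation*}
U^r(A_k)-U^r(B_k) = U^r(A_{k-1})U(B_{k-1}) - U^r(B_{k-1})U(A_{k-1}).
\end{equation*}
Now, since $A_{k-1}$ and $B_{k-1}$ are isomorphic as unrooted trees, $U(A_{k-1}) = U(B_{k-1})$; write this common polynomial as $U(A_{k-1})$. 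The expression collapses to
\begin{equation*}
U^r(A_k)-U^r(B_k) = \bigl(U^r(A_{k-1}) - U^r(B_{k-1})\bigr)U(A_{k-1}) = \Delta_0 P_{k-1}\, U(A_{k-1}) = \Delta_0 P_k,
\end{equation*}
using the induction hypothesis and the definition $P_k = U(A_0)\cdots U(A_{k-1}) = P_{k-1}U(A_{k-1})$.

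The only subtle point — the place I expect to need a little care rather than heavy computation — is the non-commutativity emphasized in the introduction: formula \eqref{eq:sep_concat} is \emph{not} symmetric in $G$ and $H$, so the two lines above genuinely differ, and it is precisely the asymmetry together with $U(A_{k-1})=U(B_{k-1})$ that makes the cross terms cancel while leaving a clean factor $\Delta_0$. I would also make explicit why $U(A_{k-1}) = U(B_{k-1})$ holds at every stage: this is immediate once one knows $A_{k-1}$ and $B_{k-1}$ are isomorphic as unrooted trees, which is part of the inductive statement being carried along. Everything else is bookkeeping with the definition of $P_k$.
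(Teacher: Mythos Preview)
Your proof is correct and follows essentially the same inductive argument as the paper: apply \eqref{eq:sep_concat} to both $A_k$ and $B_k$, subtract, cancel the symmetric term, and use $U(A_{k-1})=U(B_{k-1})$ together with the induction hypothesis. The only cosmetic difference is that the paper cites Theorem~\ref{teo8} to conclude $A_k$ and $B_k$ are not rooted-isomorphic, whereas you (more economically) observe that $\Delta_0 P_k\neq 0$ already forces $U^r(A_k)\neq U^r(B_k)$, which rules out a rooted isomorphism by invariance alone.
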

\begin{proof}
The proof is done by induction. The basis step is clear from the definition of $\Delta_0$. For the induction step, 
we assume that for a given $k$, the graphs $A_{k-1}$ and $B_{k-1}$ are isomorphic and that $U^r(A_{k-1}) - U^r(B_{k-1}) = \Delta_0P_{k-1}$.
From \eqref{AK}, it is easy to see that $A_k$ and $B_k$ are isomorphic as unrooted trees. Also, combining \eqref{AK} with \eqref{eq:sep_concat} we get 
\[U^r(A_k) = U^r(A_{k-1})(U^r(B_{k-1})+U(B_{k-1})).\]
Similarly for $B_k$ we get
\[U^r(B_k) = U^r(B_{k-1})(U^r(A_{k-1})+U(A_{k-1})).\]
Subtracting these two equations, using that $U(A_{k-1})=U(B_{k-1})$ and plugging the induction hypothesis yields 
\[U^r(A_k) - U^r(B_k) = U(A_{k-1})\big(U^r(A_{k-1})-U^r(B_{k-1})\big) = U(A_{k-1})P_{k-1}\Delta_0 = P_{k}\Delta_0.\]
Hence, by induction, \eqref{DeltaK} holds for every $k$. To finish the proof, notice that since $A_k$ and $B_k$ have distinct $U^r$, they are not rooted-isomorphic by Theorem~\ref{teo8}.
\end{proof}
Observe that all the terms of $P_k$ have degree at least $k$. Now we can state our main result.
\begin{theorem}\label{theo:YZ}
Given $k,l\in\NN$, let
\begin{equation}
Y_{k,l}={(A_k\odot A_l)\cdot (B_k\odot B_l)}\quad \text{and}\quad
Z_{k,l} = (A_l \odot B_k)\cdot (B_l\odot A_k).
\end{equation}
Then the graphs $Y_{k,l}$ and $Z_{k,l}$ (seen as unrooted trees) are not isomorphic, have the same $U_{k+l+2}$-polynomial and distinct $U_{k+l+3}$-polynomial.
\end{theorem}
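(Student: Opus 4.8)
The plan is to reduce $U(Y_{k,l})-U(Z_{k,l})$, by a single deletion–contraction, to a difference of products of $U$-polynomials of the $\odot$-halves, then to evaluate it using the product formulas of Section~\ref{sec:rooted} and Proposition~\ref{prop}, and finally to read off the length of the shortest partition surviving in the difference.

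First I would apply deletion–contraction to the edge $e_0$ of $Y_{k,l}$ joining $A_k\odot A_l$ to $B_k\odot B_l$, and likewise to the analogous edge of $Z_{k,l}$. Deleting $e_0$ disconnects $Y_{k,l}$ into $(A_k\odot A_l)\sqcup(B_k\odot B_l)$, contributing $U(A_k\odot A_l)\,U(B_k\odot B_l)$; contracting $e_0$ glues $A_k,A_l,B_k,B_l$ at a common vertex of weight $2$, and contracting the analogous edge of $Z_{k,l}$ produces the \emph{same} weighted graph. Hence
\begin{equation*}
U(Y_{k,l})-U(Z_{k,l}) = U(A_k\odot A_l)\,U(B_k\odot B_l) - U(A_l\odot B_k)\,U(A_k\odot B_l).
\end{equation*}
By Lemma~\ref{lemma:joining} and $(U^r(G))^*=U(G)$, each term $U(G\odot H)$ here equals $\bigl(\tfrac1z U^r(G)U^r(H)\bigr)^*$. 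Put $a_j=U^r(A_j)$, $b_j=U^r(B_j)$, substitute $a_j=b_j+\Delta_0 P_j$ (Proposition~\ref{prop}), and expand the four factors, using that $*$ is additive and that a $z$-free polynomial passes through $*$. Writing $R:=U(B_k\odot B_l)$, the four factors become $R+E+F+G$, $R$, $R+F$, $R+E$, where $E=P_k\bigl(\tfrac1z\Delta_0 b_l\bigr)^*$, $F=P_l\bigl(\tfrac1z\Delta_0 b_k\bigr)^*$, $G=P_kP_l\bigl(\tfrac1z\Delta_0^2\bigr)^*$; since $(R+E+F+G)R-(R+F)(R+E)=GR-EF$, one gets
\begin{equation*}
U(Y_{k,l})-U(Z_{k,l}) = P_k P_l\left[\, U(B_k\odot B_l)\bigl(\tfrac1z\Delta_0^2\bigr)^* - \bigl(\tfrac1z\Delta_0 b_k\bigr)^*\bigl(\tfrac1z\Delta_0 b_l\bigr)^* \,\right],
\end{equation*}
where $\bigl(\tfrac1z\Delta_0^2\bigr)^* = x_1^2 x_3 - x_1 x_2^2$ by a direct computation.

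It remains to count partition lengths (equivalently, numbers of $x$-factors). Let $n_j$ be the number of vertices of $A_j$, equivalently of $B_j$; the $n_j$ are pairwise distinct and at least $3$. The factor $P_k P_l$ has minimum length $k+l$, attained by a single monomial with coefficient $1$, so no cancellation occurs there. Inside the bracket, $\bigl(\tfrac1z\Delta_0^2\bigr)^*$ has length $3$, the polynomial $U(B_k\odot B_l)$ has a unique length-$1$ term (from the full edge set), and each $\bigl(\tfrac1z\Delta_0 b_j\bigr)^*$ has minimum length $2$ with length-$2$ part $x_1 x_{n_j+1}-x_2 x_{n_j}$; hence both summands of the bracket have minimum length $4$, and their length-$4$ parts do not cancel, e.g.\ $x_1^2 x_3\,x_{n_k+n_l-1}$ occurs only in the first summand, using $n_k,n_l\ge 3$. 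Therefore the shortest partition occurring in $U(Y_{k,l})-U(Z_{k,l})$ has length exactly $k+l+4$, which is exactly the statement that $U_{k+l+2}(Y_{k,l})=U_{k+l+2}(Z_{k,l})$ and $U_{k+l+3}(Y_{k,l})\ne U_{k+l+3}(Z_{k,l})$; and since isomorphic trees have equal $U$-polynomial, the latter shows that $Y_{k,l}$ and $Z_{k,l}$ are not isomorphic.

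The main obstacle is the middle step: because $*$ is not multiplicative, the expansion must be set up so that only $z$-free factors are moved across $*$, and the substitution $a_j=b_j+\Delta_0 P_j$ has to be done in all four factors at once so that the bulk of the terms cancel in pairs, leaving $P_k P_l$ times an explicit polynomial of small length. Once that identity is in hand the length bookkeeping is routine, the only real point being the elementary check that the two minimal-length homogeneous parts in the bracket do not cancel.
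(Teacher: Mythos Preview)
Your proof is correct and follows essentially the same approach as the paper: deletion--contraction on the bridging edge, then algebraic manipulation via Lemma~\ref{lemma:joining} and Proposition~\ref{prop}, then a length count. The only difference is organizational: you substitute $a_j=b_j+\Delta_0P_j$ in all four factors at once and land on the more symmetric identity with $U(B_k\odot B_l)$ and $\mathcal{D}(B_k)\mathcal{D}(B_l)$, whereas the paper iterates Lemma~\ref{l:D} and obtains the equivalent expression with $U(B_l\odot A_k)$ and $\mathcal{D}(A_k)\mathcal{D}(B_l)$; the two brackets differ by $(x_1^2x_3-x_1x_2^2)P_k\mathcal{D}(B_l)$ on each side, so they agree.
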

Before giving the proof, we need the following lemma, which is 
a corollary of Lemma \ref{lemma:joining} and Proposition \ref{prop}.
%\begin{lemma}
%For every $k$ and $l$ we have
%\[(z^{-1}\Delta_k\Delta_l)^* =  P_kP_l(x_1^2x_3-x_1x_2^2).\]
%In particular, all the terms in the polynomial $(z^{-1}\Delta_k\Delta_l)^*$ have degree at least $k+l+3$.
%\end{lemma}
%\begin{proof}
%The conclusion follows almost directly from Proposition, since 
%\[ (z^{-1}\Delta_0^2)^*=  (x_1^2z^3-2x_1x_2z^2 + x_2^2z)^* = x_1^2x_3-x_1x_2^2.\]
%\end{proof}

\begin{lemma}\label{l:D}
Let $T$ be a rooted tree and $i$ an integer. Then 
\begin{equation}
\label{eq:lD}
U(A_i\odot T) - U(B_i\odot T) =   P_i\mathcal{D}(T),
\end{equation}
where \begin{equation}
\label{eq:DT}
\mathcal{D}(T) = x_1(z U^r(T))^* - x_2 U(T).
\end{equation} In particular all the terms in $\mathcal{D}(T)$ have degree at least $2$.
\end{lemma}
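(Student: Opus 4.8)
The plan is to deduce the identity \eqref{eq:lD} directly from the joining formula of Lemma~\ref{lemma:joining} and the difference formula of Proposition~\ref{prop}, and then to pass from the rooted polynomial to the unrooted one by applying the $*$ operation. \textbf{Step 1 (the rooted difference).} First I would apply Lemma~\ref{lemma:joining} to both $A_i\odot T$ and $B_i\odot T$, obtaining
\[
U^r(A_i\odot T)-U^r(B_i\odot T)=\tfrac{1}{z}\bigl(U^r(A_i)-U^r(B_i)\bigr)\,U^r(T).
\]
By Proposition~\ref{prop} the factor $U^r(A_i)-U^r(B_i)$ equals $\Delta_0 P_i=(x_1z^2-x_2z)P_i$, so after cancelling one power of $z$ this difference becomes $(x_1z-x_2)\,P_i\,U^r(T)$.

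\textbf{Step 2 (transfer to $U$).} Next I would apply $(\,\cdot\,)^*$ to both sides. Since $*$ is additive, the left-hand side becomes $\bigl(U^r(A_i\odot T)\bigr)^*-\bigl(U^r(B_i\odot T)\bigr)^*=U(A_i\odot T)-U(B_i\odot T)$ by the Lemma following the Notation. For the right-hand side the one point that needs care---because $*$ is not multiplicative in general---is that multiplication by any factor not involving $z$ commutes with $*$: if $c=c(\mathbf{x},y)$ then $(c\,Q)^*=c\,Q^*$, since $c$ only rescales the coefficients in the expansion of $Q$ in powers of $z$. As $P_i=U(A_0)\cdots U(A_{i-1})$, $x_1$ and $x_2$ are all free of $z$, I may pull them out to get
\[
\bigl((x_1z-x_2)P_iU^r(T)\bigr)^*=P_i\Bigl(x_1\bigl(zU^r(T)\bigr)^*-x_2\bigl(U^r(T)\bigr)^*\Bigr)=P_i\bigl(x_1(zU^r(T))^*-x_2U(T)\bigr)=P_i\,\mathcal{D}(T),
\]
which is the claimed identity.

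\textbf{Step 3 (the degree bound).} Finally, for the assertion that every term of $\mathcal{D}(T)$ has degree at least $2$, I would observe that this already holds term by term for each of the two summands, so no cancellation in the difference can produce a monomial of smaller degree. Indeed, each monomial of $U(T)$ is of the form $\mathbf{x}_\lambda$ for a partition $\lambda$ of $|V(T)|$, hence has degree at least $1$, so $x_2U(T)$ has all terms of degree $\geq2$; and a typical term $\mathbf{x}_{\lambda_-(A)}z^{\lambda_r(A)}$ of $U^r(T)$ is sent by $z(\,\cdot\,)^*$ to $\mathbf{x}_{\lambda_-(A)}x_{\lambda_r(A)+1}$, carrying the extra variable $x_{\lambda_r(A)+1}$, so $(zU^r(T))^*$ has all terms of degree $\geq1$ and $x_1(zU^r(T))^*$ all terms of degree $\geq2$.

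I do not expect a genuine obstacle here: the statement is essentially a bookkeeping consequence of Lemma~\ref{lemma:joining} and Proposition~\ref{prop}, and the only subtlety worth writing out carefully is the compatibility of $*$ with multiplication by $z$-free factors invoked in Step~2.
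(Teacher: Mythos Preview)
Your proof is correct and follows essentially the same route as the paper: apply Lemma~\ref{lemma:joining} and Proposition~\ref{prop} to compute the rooted difference as $P_iU^r(T)\Delta_0/z$, then pass to the unrooted polynomial via the $*$ operation, pulling $P_i$ outside because it is $z$-free. You are somewhat more explicit than the paper in justifying why $*$ commutes with multiplication by $z$-free factors, and you supply an argument for the degree bound on $\mathcal{D}(T)$, which the paper simply asserts.
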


\begin{proof}
By Lemma \ref{lemma:joining}, we have
\[
U^r(A_i \odot T)- U^r(B_i \odot T) = z^{-1}U^r(T) \big(U^r(A_i)-U^r(B_i)\big).
\]
Applying Proposition \ref{prop} to the last term yields
\[U^r(A_i \odot T)- U^r(B_i \odot T) = P_iU^r(T)\frac{\Delta_0}{z}.
\]
The conclusion now follows by taking the specialization $z^n\rightarrow x_n$ in the last equation to obtain (note that $P_i$ does not depend on $z$)
\[U(A_i\odot T) - U(B_i\odot T) = P_i \left[U^r(T)(x_1z-x_2)\right]^* = P_i\mathcal{D}(T).\]
\end{proof}
\begin{proof}[Proof of Theorem \ref{theo:YZ}]
We start by applying the deletion-contraction formula to the edges corresponding to the $\cdot$  operation in the definitions of $Y_{k,l}$ and $Z_{k,l}$; it is easy to see that 
\begin{equation} \label{eq:first_difference}
U(Y_{k,l}) - U(Z_{k,l}) = U(A_k \odot A_l) U(B_k\odot B_l) - U(A_l\odot B_k)U(B_l\odot A_k),
\end{equation} 
since after contracting the respective edges we get isomorphic weighted trees.

We apply Lemma~\ref{l:D} twice, to $T = A_k$ and $i=l$ first, and then to $T=B_k$ and $i=l$, and replace the terms $U(A_k\odot A_l)$ and $U(A_l\odot B_k)$ in~\eqref{eq:first_difference}. Recalling that $\odot$ is commutative and after some cancellations, we obtain
$$U(Y_{k,l})-U(Z_{k,l}) =  P_l\Big( \mathcal{D}(A_k) U(B_k\odot B_l) -\mathcal{D}(B_k) U(B_l\odot A_k) \Big).$$
We use Lemma~\ref{l:D} once more, with $T=B_l$ and $i=k$, to arrive at
\begin{equation}
\label{eq:ykzk:final}
U(Y_{k,l})-U(Z_{k,l}) =  P_l \Big(\big(\mathcal{D}(A_k)-\mathcal{D}(B_k)\big) U(B_l\odot A_k) - \mathcal{D}(A_k)\mathcal{D}(B_l)P_k\Big)
\end{equation}
Using \eqref{eq:DT} and Proposition~\ref{prop} we get
\[\mathcal{D}(A_k)-\mathcal{D}(B_k) = x_1P_k(z\Delta_0)^* = x_1(x_1x_3-x_2^2)P_k,\]
 and substituting this into \eqref{eq:ykzk:final} yields
\begin{equation}
\label{eq:last}
U(Y_{k,l})-U(Z_{k,l}) = P_lP_k\Big((x_1^2x_3-x_1x_2^2)U(B_l\odot A_k) - \mathcal{D}(A_k)\mathcal{D}(B_l)\Big).
\end{equation}
This implies that all the terms that appear in the difference have degree at least $l+ k + 4$. Hence $Y_{k,l}$ and $Z_{k,l}$ have the same $U_{k+l+2}$-polynomial. To see that they have distinct $U_{k+l+3}$-polynomial, from \eqref{eq:last} we can deduce that the only terms of degree $l+k+4$ come from terms of degree $4$ in the difference 
\[
\Big((x_1^2x_3-x_1x_2^2)U(B_l\odot A_k) - \mathcal{D}(A_k)\mathcal{D}(B_l)\Big).\]
An explicit computation of these terms yields
\[ (x_1^2x_3-x_1x_2^2)x_{n(l)+n(k)-1}-(x_1 x_{n(k)+1}-x_2x_{n(k)})(x_1x_{n(l)+1}-x_2x_{n(l)}),\]
where $n(k)$ is the number of vertices of $A_k$ (and also $B_k$). From this last equation, the conclusion follows. 
\end{proof}

We may consider the following quantity: 
\[\Phi(m) := \min\{l: \exists \text{ non-isomorphic trees $H,G$ of size $l$ s.t. $U_m(H)=U_m(G)$}\}.\]
\begin{proposition}
\label{prop:Phi}
We have 
\[\Phi(m)\leq 
\begin{cases} 
6\cdot 2^{\frac{m}{2}}-2, &\text{if $m$ is even}\\
6\cdot 3\cdot 2^{\lfloor\frac{m}{2}\rfloor-1}-2,& \text{if $m$ is odd.}
\end{cases}\]
In particular $\Phi(m)$ is finite.
\end{proposition}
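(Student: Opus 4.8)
The plan is to read off the bound from Theorem~\ref{theo:YZ}. For any $k,l\in\NN$ with $k+l+2=m$, the trees $Y_{k,l}$ and $Z_{k,l}$ are non-isomorphic and have the same $U_m$-polynomial, so by definition $\Phi(m)\le |V(Y_{k,l})|$. Thus two things remain: (i) to compute $|V(Y_{k,l})|$ explicitly, and (ii) to choose the pair $(k,l)$ with $k+l=m-2$ that makes this quantity smallest.

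For (i), I would first show by induction that $n(k):=|V(A_k)|=|V(B_k)|$ equals $3\cdot 2^{k}$. The base case is $n(0)=3$, and since the operation $\cdot$ forms a disjoint union and adds a single edge, the recursion~\eqref{AK} gives $n(k)=n(k-1)+n(k-1)=2\,n(k-1)$. Next, the operation $\odot$ identifies the two roots, so $|V(A_k\odot A_l)|=n(k)+n(l)-1$, and the same count holds for any joining of one of $A_k,B_k$ with one of $A_l,B_l$. Applying $\cdot$ once more, $|V(Y_{k,l})|=|V(Z_{k,l})|=2\big(n(k)+n(l)-1\big)=6\,(2^{k}+2^{l})-2$.

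For (ii), it suffices to minimize $2^{k}+2^{l}$ subject to $k+l=m-2$, $k,l\ge 0$. Writing $l=m-2-k$, the function $k\mapsto 2^{k}+2^{m-2-k}$ is strictly convex and symmetric about $k=(m-2)/2$, so its minimum over integers is attained at the balanced split $k=\lfloor (m-2)/2\rfloor$, $l=\lceil (m-2)/2\rceil$. When $m$ is even this yields $k=l=m/2-1$ with $2^{k}+2^{l}=2^{m/2}$, hence $\Phi(m)\le 6\cdot 2^{m/2}-2$; when $m$ is odd it yields $\{k,l\}=\{(m-3)/2,(m-1)/2\}$ with $2^{k}+2^{l}=2^{(m-3)/2}(1+2)=3\cdot 2^{\lfloor m/2\rfloor-1}$, hence $\Phi(m)\le 6\cdot 3\cdot 2^{\lfloor m/2\rfloor-1}-2$. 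Finiteness of $\Phi(m)$ for \emph{all} $m$ then follows because $\Phi$ is non-decreasing in $m$; for the finitely many small $m$ not reached by the construction the displayed bound is verified directly (for instance, the path and the star on four vertices share the same $U_0$-polynomial). The proof involves no conceptual difficulty beyond Theorem~\ref{theo:YZ}; the only thing to be careful about is the vertex-count bookkeeping for $\odot$ and $\cdot$ and the check that the balanced split is indeed optimal among integer pairs.
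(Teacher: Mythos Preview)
Your proof is correct and follows essentially the same approach as the paper: invoke Theorem~\ref{theo:YZ} to get $\Phi(m)\le |V(Y_{k,l})|$ whenever $k+l+2=m$, compute $|V(Y_{k,l})|=6(2^k+2^l)-2$ from $|V(A_i)|=3\cdot 2^i$, and minimize over the balanced split. You are in fact slightly more careful than the paper, which does not address the cases $m\in\{0,1\}$ where no pair $(k,l)\ge 0$ with $k+l+2=m$ exists; your monotonicity remark and direct verification fill that small gap.
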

\begin{proof}
By Theorem \ref{theo:YZ}, we see that $\Phi(m)\leq|Y_{k,l}|$ for all $(k,l)$ such that $k+l+2=m$. It is easy to check that $|A_i|=|B_i|=3\cdot 2^i$ for all $i$. Thus, 
\[|Y_{k,l}|= 2(|A_k|+|B_l|-1)=6(2^k+2^l)-2\quad\text{for all $(k,l)$}.\]
 If $m=k+l+2$ is fixed, then we see that $|Y_{k,l}|$ is minimized
when $k=l=\frac{m}{2}-1$ if  $m$ is even and otherwise is minimized when $k=\lfloor\frac{m}{2}\rfloor$ and $l=\lfloor\frac{m}{2}\rfloor-1$. Replacing the values of $k$ and $l$ yields the desired inequality.
\end{proof}
Observe that when $(k,l) \in \{ (0,0), (1,0), (1,1)\}$ (respectively), the graphs $Y_{k,l}$ and $Z_{k,l}$ are the smallest examples of non-isomorphic trees with the same $U_m$ for $m\in \{2,3,4\}$ (respectively). This fact was verified computationally in \cite{smith2015symmetric}. This leads us to make the following conjecture 
\begin{conjecture}\label{c:YZ}
If $m$ is even, then $Y_{m/2-1,m/2-1}$ and $Z_{m/2-1,m/2-1}$ are the smallest non-isomorphic trees with the same $U_m$-polynomial and if $m$ is odd, then 
the same is true for $Y_{\lfloor m/2\rfloor,\lfloor m/2\rfloor-1}$ and $Z_{\lfloor m/2\rfloor,\lfloor m/2\rfloor-1}$. In other words, 
\[\Phi(m) = 
\begin{cases} 
6\cdot 2^{\frac{m}{2}}-2, &\text{if $m$ is even}\\
6\cdot 3\cdot 2^{\lfloor\frac{m}{2}\rfloor-1}-2,& \text{if $m$ is odd.}
\end{cases}\]
\end{conjecture}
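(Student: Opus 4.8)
The plan is to establish the matching lower bound, since the upper bound $\Phi(m)\le 6\cdot 2^{m/2}-2$ (resp.\ $6\cdot 3\cdot 2^{\lfloor m/2\rfloor-1}-2$) is exactly Proposition~\ref{prop:Phi}, witnessed by the family $Y_{k,l},Z_{k,l}$ of Theorem~\ref{theo:YZ}. I would first reduce to a statement about a single tree size: the degree-one term of $U(T)$ is $x_n$ (arising from $A=E$), and it survives in $U_m$ for every $m\ge 0$, so $U_m$ detects $n=|V(T)|$; thus two trees with the same $U_m$ have the same number of vertices, and it suffices to prove that any two non-isomorphic trees on $n$ vertices with $n$ strictly below the conjectured value of $\Phi(m)$ are separated by $U_m$. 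Writing $B(m)$ for that value, the whole problem can then be phrased as an extremal bound on the \emph{difference polynomial}: since for a tree $r(A)=|A|$ and a subset $A$ with $|E\setminus A|=j$ splits $T$ into $j+1$ parts, agreement of the two $U_m$-polynomials is equivalent to the minimum degree of $U(H)-U(G)$ being at least $m+2$. The goal becomes to show that if $H,G$ are non-isomorphic trees with $\deg_{\min}\big(U(H)-U(G)\big)\ge m+2$, then $H$ and $G$ have at least $B(m)$ vertices.

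For the main argument I would exploit the rooted machinery of Section~\ref{sec:rooted} to make the construction's efficiency visible and then argue that it is optimal. The key structural fact from Proposition~\ref{prop} is that the cheapest way to raise the minimum degree of a difference is to multiply by the factor $P_k=U(A_0)\cdots U(A_{k-1})$, whose minimum degree is $k$ but which only costs $|A_k|=3\cdot 2^k$ vertices; the $\odot/\cdot$ calculus of \eqref{eq:pseudo} and \eqref{eq:sep_concat} then assembles two such factors into a difference of minimum degree $m+2$ on $6(2^k+2^l)-2$ vertices. I would try to prove a converse: using that $U_m$ records, for each $j\le m$, the multiset of component-size partitions obtained by deleting $j$ edges, one can peel the trees by their low-degree edge-split statistics in the spirit of Lemmas~\ref{lem:degree} and~\ref{lem:irreducible}, showing inductively that any non-isomorphic pair whose difference is hidden up to degree $m+1$ must contain, as a rooted sub-branch, a smaller such pair, and that the recursion bottoms out exactly at the base discrepancy $\Delta_0=x_1z^2-x_2z$. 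Tracking the vertex count through this peeling should reproduce the recursion $|A_k|=2|A_{k-1}|$ and force the lower bound $B(m)$; identifying the extremal trees as precisely $Y,Z$ would follow from the uniqueness of the factorization in $\ZZ[z,x_1,x_2,\ldots]$ used in Theorem~\ref{teo8}.

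The hard part, and the reason the statement is only a conjecture, is exactly this extremal lower bound: controlling how cheaply a structural difference between two trees can be concealed from all edge-deletion statistics of order at most $m$. This is a quantitative strengthening of Stanley's conjecture in the small-tree regime, and I do not expect a short proof. A realistic route combines three ingredients: the structural peeling above to reduce an arbitrary minimal pair to the $A/B$ calculus; the Prouhet--Tarry--Escott connection of \cite{Aliste2017PTE}, which already ties minimal hidden differences to minimal solutions of a number-theoretic problem whose sizes are known to grow, so as to rule out sporadic cheaper mechanisms; and the computational verification of the base cases $m\le 4$ recorded in \cite{smith2015symmetric}, which pins down the recursion's initial conditions. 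The main obstacle is that no current technique controls $\deg_{\min}\big(U(H)-U(G)\big)$ in terms of $|V(H)|$ for general trees, so the crux is to prove that achieving minimum degree $m+2$ with fewer than $B(m)$ vertices is impossible.
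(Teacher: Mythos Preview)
The statement you were asked to prove is Conjecture~\ref{c:YZ}, and the paper contains \emph{no} proof of it: it is explicitly posed as an open conjecture, supported only by the upper bound of Proposition~\ref{prop:Phi} and the computational verification of the cases $m\in\{2,3,4\}$ from~\cite{smith2015symmetric}. There is therefore nothing in the paper to compare your attempt against.

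To your credit, you recognize this yourself: you correctly isolate the upper bound as already done (Proposition~\ref{prop:Phi}), correctly reformulate the lower bound as a statement about the minimum degree of $U(H)-U(G)$, and then candidly admit that ``the reason the statement is only a conjecture'' is that no technique currently controls $\deg_{\min}(U(H)-U(G))$ in terms of $|V(H)|$. What you have written is thus not a proof but a research outline. The speculative inductive ``peeling'' you describe---reducing an arbitrary minimal pair to one governed by the $A_k/B_k$ calculus---has no supporting lemma in the paper or elsewhere, and the appeal to the Prouhet--Tarry--Escott connection of~\cite{Aliste2017PTE} goes in the wrong direction: that paper uses PTE solutions to \emph{produce} trees with equal $U_k$, not to bound from below the size of all such pairs. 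In short, your proposal correctly locates the gap but does not close it; since the paper does not close it either, there is no discrepancy to report beyond the fact that the target statement remains open.
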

The following proposition relates $\Phi$ with Stanley's conjecture. 
\begin{proposition}
The following assertions are true:
\begin{enumerate}[a)]
\item For every $m$, Stanley's conjecture is true for trees with at most $\Phi(m)-1$ vertices.
\item Stanley's conjecture is true if and only if $\lim_m\Phi(m)=\infty$. 
\item Conjecture \ref{c:YZ} implies Stanley's conjecture.
\end{enumerate} 
\end{proposition}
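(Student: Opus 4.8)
The plan is to prove the three assertions essentially as formal consequences of the definition of $\Phi$ together with Theorem~\ref{theo:YZ}. First I would record the basic relation between $\Phi$ and the $U$-polynomial: if two non-isomorphic trees $G,H$ on $l$ vertices satisfy $U(G)=U(H)$, then for \emph{every} $m$ they satisfy $U_m(G)=U_m(H)$ (since $U_m$ is obtained from $U$ by keeping only the terms of degree at most $m+1$), and hence $\Phi(m)\le l$ for all $m$. Contrapositively, if no non-isomorphic pair of trees on $\le N$ vertices shares the full $U$-polynomial — equivalently, by the tree equivalence of $U$ and the chromatic symmetric function, if Stanley's conjecture holds for trees with at most $N$ vertices — then a pair witnessing $U_m(G)=U_m(H)$ with $G\not\cong H$ must use more than $N$ vertices, i.e. $\Phi(m)\ge N+1$.

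For part (a), I would argue directly: a non-isomorphic pair $G,H$ realizing the minimum in the definition of $\Phi(m)$ has exactly $\Phi(m)$ vertices and satisfies $U_m(G)=U_m(H)$. Any counterexample to Stanley's conjecture with at most $\Phi(m)-1$ vertices would be a non-isomorphic pair with the same $U$-polynomial, hence (by the observation above) the same $U_m$-polynomial, contradicting the minimality of $\Phi(m)$. So Stanley's conjecture holds up to $\Phi(m)-1$ vertices. For part (b), the forward direction is immediate from (a): if Stanley's conjecture is true, then for each fixed $N$ there is no non-isomorphic pair on $\le N$ vertices with the same $U$-polynomial; since $U_m(G)=U_m(H)$ for all $m$ implies $U(G)=U(H)$ (the $U_m$ exhaust all terms as $m\to\infty$, and every term has bounded degree on a fixed number of vertices), a routine compactness-style argument — there are finitely many trees on $\le N$ vertices, so for $m$ large enough $U_m$ already determines $U$ among them — shows $\Phi(m)>N$ for $m$ large, giving $\lim_m\Phi(m)=\infty$. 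For the converse, if Stanley's conjecture fails there is a fixed non-isomorphic pair $G,H$ with $U(G)=U(H)$, so $\Phi(m)\le |G|$ for all $m$ and the limit is finite. Part (c) is then purely arithmetic: Conjecture~\ref{c:YZ} gives the exact value $\Phi(m)=6\cdot 2^{m/2}-2$ (or the odd analogue), which tends to infinity with $m$, so by part (b) Stanley's conjecture holds.

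The only genuinely delicate point is the ``finitely many trees'' step in part (b): I must make precise that $U_m(G)=U_m(H)$ for all $m$ forces $U(G)=U(H)$, and that for a fixed vertex count $N$ there is a threshold $m_0=m_0(N)$ beyond which $U_{m_0}$ alone distinguishes all pairs of non-isomorphic trees on $\le N$ vertices that are distinguished by $U$. This follows because each of the finitely many trees on $\le N$ vertices has a $U$-polynomial all of whose terms have degree at most $N$, so $U_{N-1}=U$ on this class; taking $m_0=N-1$ suffices. I expect this bookkeeping to be the main (though still routine) obstacle; the rest is direct unwinding of definitions.
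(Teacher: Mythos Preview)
Your proposal is correct and follows essentially the same outline as the paper. Part~(a) and part~(c) match the paper exactly. For part~(b) there is one small wording slip and one genuine (minor) difference. The slip: you write ``the forward direction is immediate from~(a)'' and then argue that Stanley's conjecture implies $\lim_m\Phi(m)=\infty$; in fact~(a) is what gives the \emph{other} implication (if $\Phi(m)\to\infty$ then every tree eventually falls below $\Phi(m)-1$), which is how the paper uses it. Your actual argument for Stanley $\Rightarrow$ $\Phi(m)\to\infty$ does not use~(a) and is fine on its own. The difference: for that implication you use the explicit threshold $m_0=N-1$ (since on trees with at most $N$ vertices every term of $U$ has degree at most $N$, so $U_{N-1}=U$ there), whereas the paper argues by pigeonhole: if $\Phi(m)\le N$ for all $m$, pick witnesses $(T_m,T_m')$; only finitely many pairs exist on $\le N$ vertices, so some pair recurs for infinitely many $m$ and hence has equal $U$. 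Your threshold argument is a bit more explicit and avoids the compactness step, but both approaches are short and yield the same conclusion.
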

\begin{proof}
To show a), observe that the existence of non-isomorphic trees $T$ and $T'$ of size smaller than $\Phi(m)$ with the same $U$-polynomial contradicts the definition of $\Phi(m)$. To see b), if $\lim_m\Phi(m)=\infty$, then by a), then clearly Stanley's conjecture is true for all (finite) trees. For the converse, suppose that $\Phi(m)$ is uniformly bounded by $N$, and let $T_m,T_m'$ be two non-isomorphic trees of size smaller or equal than $N$ with the same $U_m$-polynomial. Since there finitely many pairs of trees of size smaller or equal than $N$, it follows that there exist $T$ and $T'$ two trees such that $T=T_m$ and $T'=T'_m$ for infinitely many $m$. This implies that $U(T)=U(T')$ and this would contradict Stanley's conjecture. This finish the proof of b). Assertion c) follows directly from Conjecture \ref{c:YZ} and b). 
\end{proof}

%From this conjecture it would follow that if we denote by $\Phi(m)$ the minimum size. 

\section*{Acknowledgments}
The first and third author are partially supported by CONICYT FONDECYT Regular 1160975 and Basal PFB-03 CMM Universidad de Chile. The second author is partially supported by the Spanish
Ministerio de Economía y Competitividad project MTM2017-82166-P. A short version of this work appeared in \cite{aliste2018dmd}.

%We could mention relations to other papers here, or scatter them throughout the text. 

%Given the lack of space, it does not seem very feasible to have a section for remarks... (Anna)

%\bibliographystyle{plain} 
%\bibliography{graph_theory,pte}

\begin{thebibliography}{10}

\bibitem{Aliste2017PTE}
Jos\'e Aliste-Prieto, Anna de~Mier, and Jos\'e Zamora.
\newblock On trees with the same restricted {$U$}-polynomial and the
  {P}rouhet-{T}arry-{E}scott problem.
\newblock {\em Discrete Math.}, 340(6):1435--1441, 2017.

\bibitem{aliste2014proper}
Jos{\'e} Aliste-Prieto and Jos{\'e} Zamora.
\newblock Proper caterpillars are distinguished by their chromatic symmetric
  function.
\newblock {\em Discrete Mathematics}, 315:158--164, 2014.

\bibitem{aliste2018dmd}
Jos{\'e} Aliste-Prieto, Jos{\'e} Zamora, and Anna de~Mier.
\newblock On graphs with the same restricted u-polynomial and the u-polynomial
  for rooted graphs.
\newblock {\em Electronic Notes in Discrete Mathematics}, 68:185--190, 2018.

\bibitem{bollobas2000polychromatic}
B{\'e}la Bollob{\'a}s and Oliver Riordan.
\newblock Polychromatic polynomials.
\newblock {\em Discrete Mathematics}, 219(1):1--7, 2000.

\bibitem{brylawski1981intersection}
Tom Brylawski.
\newblock Intersection theory for graphs.
\newblock {\em J. Combin. Theory Ser. B}, 30(2):233--246, 1981.

\bibitem{gordon1989greedoid}
Gary Gordon and Elizabeth McMahon.
\newblock A greedoid polynomial which distinguishes rooted arborescences.
\newblock {\em Proceedings of the American Mathematical Society},
  107(2):287--298, 1989.

\bibitem{hasebe2017order}
Takahiro Hasebe and Shuhei Tsujie.
\newblock Order quasisymmetric functions distinguish rooted trees.
\newblock {\em Journal of Algebraic Combinatorics}, 46(3-4):499--515, 2017.

\bibitem{heil2018algorithm}
S.~Heil and C.~Ji.
\newblock On an algorithm for comparing the symmetric chromatic functions of
  trees.
\newblock {\em arXiv preprint arXiv:1801.07363}, 2018.

\bibitem{loebl2019isomorphism}
Martin Loebl and Jean-S{\'e}bastien Sereni.
\newblock Isomorphism of weighted trees and {S}tanley's isomorphism conjecture
  for caterpillars.
\newblock {\em Annales de l'Institut Henri Poincar{\'e} D, vol. 6, issue 3, pp.
  357-384}, 6:357--384, 2019.

\bibitem{martin2008distinguishing}
Jeremy~L Martin, Matthew Morin, and Jennifer~D Wagner.
\newblock On distinguishing trees by their chromatic symmetric functions.
\newblock {\em Journal of Combinatorial Theory, Series A}, 115(2):237--253,
  2008.

\bibitem{merino2009equivalence}
Criel Merino and Steven~D Noble.
\newblock The equivalence of two graph polynomials and a symmetric function.
\newblock {\em Combinatorics, Probability and Computing}, 18(04):601--615,
  2009.

\bibitem{noble99weighted}
S.~D. Noble and D.~J.~A. Welsh.
\newblock A weighted graph polynomial from chromatic invariants of knots.
\newblock {\em Ann. Inst. Fourier (Grenoble)}, 49(3):1057--1087, 1999.
\newblock Symposium \`a la M\'emoire de Fran\c cois Jaeger (Grenoble, 1998).

\bibitem{pawlowski2018chromatic}
Brendan Pawlowski.
\newblock Chromatic symmetric functions via the group algebra of {$S_n$}.
\newblock {\em arXiv preprint arXiv:1802.05470}, 2018.

\bibitem{sarmiento2000polychromate}
Irasema Sarmiento.
\newblock The polychromate and a chord diagram polynomial.
\newblock {\em Ann. Comb.}, 4(2):227--236, 2000.

\bibitem{smith2015symmetric}
Isaac Smith, Zane Smith, and Peter Tian.
\newblock Symmetric chromatic polynomial of trees.
\newblock {\em arXiv preprint arXiv:1505.01889}, 2015.

\bibitem{stanley95symmetric}
Richard~P. Stanley.
\newblock A symmetric function generalization of the chromatic polynomial of a
  graph.
\newblock {\em Adv. Math.}, 111(1):166--194, 1995.

\end{thebibliography}

\end{document}